\theoremstyle{plain}
\newtheorem{thm}{\bf Theorem}[section]
\newtheorem{lem}[thm]{\bf Lemma}
\theoremstyle{remark}
\newtheorem{defn}[thm]{\bf Def{}inition}
\numberwithin{equation}{section}
\newcommand{\CC}{\mathbb{C}}
\newcommand{\NN}{\mathbb{N}}
\newcommand{\RR}{\mathbb{R}}
\newcommand{\KK}{\mathbb{K}}
\begin{document}
\baselineskip8pt

\title{ Approximative Atomic Systems for operators in Banach spaces}

\author[S. Jahan]{Shah Jahan}
\address{Shah Jahan, Department of Mathematics, Ramjas College,
University of Delhi, Delhi-110007, India}
\email{chowdharyshahjahan@gmail.com}

\begin{abstract}\baselineskip10pt
$K$-frames and atomic systems for an operator $K$ in Hilbert spaces were introduced by Gavruta \cite{12} and further studied by Xio, Zhu and Gavruta \cite{21}. In this paper, we have introduced the notion of an approximative atomic system for an operator $K$ in Banach spaces and obtained interesting results. A complete characterization of family of approximative local atoms of subspace of Banach space has been obtained. Also, a necessary and sufficient condition for the existence of an approximative atomic system for an operator $K$ is given. Finally, explicit methods are given for the construction of an approximative atomic systems for an operator $K$ from a given Bessel sequence and approximative $\mathcal{X}_d$-Bessel sequence.
\end{abstract}

\subjclass[2010]{42C15; 42C30; 42C05; 46B15}

\keywords{Approximative atomic decomposition, Approximative $K$-atomic decomposition, frames, approximative $\mathcal{X}_d$-frame, approximative $\mathcal{X}_d$-Bessel sequence, atomic systems, local atoms.}

\maketitle \thispagestyle{empty} \baselineskip12pt
\section{Introduction}
A sequence $\{f_n\}_{n=1}^\infty$ in separable Hilbert space $\mathcal{H}$ is called a frame for the Hilbert space $\mathcal{H},$ if there exist positive constants $A,~ B >0$ such that
\begin{align}\label{0} A\| f\|^2_\mathcal{H} \leq \sum_{n=1}^\infty |\langle f, f_n \rangle|^2 \leq B\|f\|^2_\mathcal{H},~~~ \mbox{for all}~~ f\in \mathcal{H}
 \end{align} The positive constants $A$ and $B$ are called the lower and upper frame bounds of the frame, respectively. The inequality in (\ref{0}) is called the frame inequality of the frame. The frame is called a tight frame if $A=B$ and is called normalized tight frame if $A=B=1.$  If $\{f_n\}_{n=1}^\infty$  is a frame for $\mathcal{H}$ then the following operators are associated with it.
  \begin{enumerate}[(a)]
 \item Pre-frame operator $T: l^2(\mathbb{N}) \longrightarrow \mathcal{H}$ is defined as $T\{c_n\}_{n=1}^\infty= \sum\limits_{k=1}^\infty c_nf_n,~~\{c_n\}_{n=1}^\infty \in l^2(\mathbb{N}).$
 \item Analysis operator $T^*: \mathcal{H} \longrightarrow l^2(\mathbb{N}), T^*f= \{\langle f, f_k\rangle\}_{k=1}^\infty~~~ f \in \mathcal{H}.$
 \item Frame operator $S=TT^*=: \mathcal{H} \longrightarrow \mathcal{H},~~ Sf= \sum\limits_{k=1}^\infty \langle f, f_k \rangle f_k, ~~f \in \mathcal{H}.$ The frame operator $S$ is bounded, linear and invertible on $\mathcal{H}$. Thus, a frame for $\mathcal{H}$ allows each vector in $\mathcal{H}$ to be written as a linear combination of the elements in the frame, but the linear independence between the elements is not required; i.e for each vector $f \in \mathcal{H}$ we have,
     \begin{eqnarray*}
     f=SS^{-1}f= \sum\limits_{k=1}^\infty \langle f, f_k \rangle f_k.
     \end{eqnarray*}
   \end{enumerate}
Frames in Hilbert spaces were introduced by Duffin and Schaeffer \cite{8} in 1952, while addressing some deep problems in non-harmonic Fourier series. Frames were reintroduced by Daubechies, Grossmann and Meyer \cite{DGM} after three decades, in 1986, frames were brought to life, and were widely studied after this nobel work. Frames are generalization of orthonormal basis. The
main property of frames which makes them useful is their redundancy. Now, frames play an important role not only in  pure mathematics but also in applied mathematics. Representation
of signals using frames is advantageous over basis expansions in
a variety of practical applications in science and engineering. In
particular, frames are widely used in sampling theory \cite{ab, es}, wavelet theory \cite{id}, signal processing \cite{pg}, image processing \cite{ma}, pseudo-differential operators \cite{pdo},
filter banks \cite{cf}, quantum computing \cite{qc}, wireless sensor network \cite{ws}, coding theory \cite{ct}, geometry\cite{SV1, SV2} and so on.
 Feichtinger and Gr\"{o}cheing \cite{10} extended the notion of frames to Banach space and defined the notion of atomic decomposition.
 Gr\"{o}cheing \cite{13} introduced a more general concept for Banach spaces called Banach frame. Banach frames and atomic decompositions were further studied in \cite{6,13,14}. Casazza, Christensen and Stoeva \cite{4} studied $\mathcal{X}_d$-frame and $\mathcal{X}_d$-Bessel sequence in Banach spaces. Shah \cite{SS} defined and studied approximative $\mathcal{X}_d$-frames and approximative $\mathcal{X}_d$-Bessel sequences. He gave the following definition.
 \begin{defn}\cite{SS}
A sequence $ \{h_{n,i} \}\underset{n \in \mathbb{N}}{_{i=1,2,3,...,m_n}} \subseteq \mathcal{X}^*,$ where $\{m_n\}$ is an increasing sequence of positive integers, is called an approximative \emph{$\mathcal{X}_d$-frame} for $\mathcal{X}$ if
\begin{enumerate}[(a)]
\item $\{h_{n,i}(x) \}\underset{n \in \mathbb{N}}{_{i=1,2,3,...,m_n}} \in \mathcal{X}_d$, for all $ x\in \mathcal{X}.$
\item There exist constants $A$ and $B$ with $0 < A \leq B <\infty$ such that
\begin{eqnarray}\label{XD}
 A\|x \|_\mathcal{X}~\leq \| \{h_{n,i}(x) \}\underset{n \in \mathbb{N}}{_{i=1,2,3,...,m_n}} \|_{\mathcal{X}_d}~\leq B \|x \|_\mathcal{X},~\text{for all} \ x\in \mathcal{X}.
\end{eqnarray}
\end{enumerate}
\end{defn}
\noindent The constants $A$ and $B$ are called approximative \emph{$\mathcal{X}_d$-frame} bounds.
If atleast (a) and the upper bound condition in (\ref{XD}) are satisfied, then $\{h_{n,i}(x) \}\underset{n \in \mathbb{N}}{_{i=1,2,3,...,m_n}}$ is called an approximative \emph{$\mathcal{X}_d$-Bessel sequence} for $\mathcal{X}.$\\
One may note that if $\{f_n\}$ is an $\mathcal{X}_d$-frame for $\mathcal{X},$ then for $\{h_{n,i} \}=f_i,~i=1,2,3,...,n;~ n \in \mathbb{N}$, $\{h_{n,i} \}\underset{n \in \mathbb{N}}{_{i=1,2,3,...,m_n}}$ is an approximative $\mathcal{X}_d$-frame for $\mathcal{X}.$ Also, note that if $\{f_n\}$ is an $\mathcal{X}_d$-Bessel sequence for $\mathcal{X},$ then for $\{h_{n,i} \}=f_i,~i=1,2,3,...,n;~ n \in \mathbb{N}$, $\{h_{n,i} \}\underset{n \in \mathbb{N}}{_{i=1,2,3,...,m_n}}$ is an approximative $\mathcal{X}_d$-Bessel sequence for $\mathcal{X}.$ The
bounded linear operator $U: \mathcal{X} \rightarrow \mathcal{X}_d$ given by
\begin{eqnarray}
U(x)=\{h_{n,i}(x) \}\underset{n \in \mathbb{N}}{_{i=1,2,3,...,m_n}}, x \in \mathcal{X}
\end{eqnarray}
 is called an \emph{analysis operator} associated to the approximative $\mathcal{X}_d$-Bessel sequence $\{h_{n,i} \}\underset{n \in \mathbb{N}}{_{i=1,2,3,...,m_n}}$.
 If $\{h_{n,i} \}\underset{n \in \mathbb{N}}{_{i=1,2,3,...,m_n}}$ is an approximative \emph{$\mathcal{X}_d$-frame} for $\mathcal{X}$ and there exists a bounded linear operator $S:\mathcal{X}_d\longrightarrow \mathcal{X}$ such that $S(\{h_{n,i} \}\underset{n \in \mathbb{N}}{_{i=1,2,3,...,m_n}})=x,$~ for all $x\in \mathcal{X}$, then $(\{h_{n,i} \}\underset{n \in \mathbb{N}}{_{i=1,2,3,...,m_n}},S)$ is called an approximative \emph{Banach frame} for $\mathcal{X}$ with respect to $\mathcal{X}_d$.\\
 Atomic systems for an operator $K$ in Hilbert spaces were introduced by Gavruta \cite{12}.  Xiao et al. \cite{21} discussed relationship between $K$-frames and ordinary frames in Hilbert spaces. Poumai and Jahan \cite{KS} introduced atomic systems for operators in Banach spaces. Frames for operators in Banach spaces were further studied in \cite{R1, R3}.\\
 \textbf{Outline of the paper.}
$K$-frames and atomic systems for an operator $K$ in Hilbert spaces were introduced by Gavruta \cite{12} and further studied by Xiao, Zhu and Gavruta \cite{21} discussed relationship between $K$-frames and frames in Hilbert spaces.
 In the present paper, we define approximative atomic system for an operator $K$ in a Banach space  and prove some results on the existence of approximative atomic system for $K$. We also define approximative family of local atoms for subspaces and give a characterization for the approximative family of local atoms for subspaces. Also, we discuss  methods to construct approximative atomic system for an operator $K$ from an approximative Bessel sequence and an approximative $\mathcal{X}_d$-Bessel sequence.\\
\\\indent Throughout this paper, $\mathcal{X}$ will denote a Banach space over the scalar field $\KK$($\RR$ or $\CC$), $\mathcal{X}^*$ the dual space of $\mathcal{X}$, $\mathcal{X}_d$ a BK-space, $\mathcal{X}_d^*$ denotes dual of $\mathcal{X}_d$ and we will assume that $\mathcal{X}_d$ has a sequence of canonical unit vectors as basis. By $\{h_{n,i}\}$ we means a sequence of coefficient functionals of row finite matrix. $L(\mathcal{X})$ will denote the set of all bounded linear operators from $\mathcal{X}$ into $\mathcal{X}$. For $T\in L(\mathcal{X}) $, $T^{*}$ denotes the adjoint of $T$. $\pi:\mathcal{X} \longrightarrow \mathcal{X}^{**}$ is the natural canonical projection from $\mathcal{X}$ onto $\mathcal{X}^{**}$. A sequence space $S$ is called a \emph{BK-space} if it is a Banach space and the co-ordinate functionals are continuous on $S.$ That is the relations $x_n= \{ {\alpha_j}^{(n)}\}$, $x=\{ \alpha_j\} \in S$, $\lim\limits_{n \longrightarrow \infty}x_n=x$ imply $\lim\limits_{n \longrightarrow \infty}\alpha_j^{(n)}=\alpha_j ~~~(j=1,2,3,...).$

\section{Preliminaries}
  Gavruta \cite{12}, introduced the notion of $K$-frame and atomic system for an operator $K$ in a Hilbert space. She gave the following definition.

\begin{defn} \cite{12} Let $\mathcal{H}$ be a Hilbert space, $K\in L(\mathcal{H})$ and $\lbrace x_n\rbrace\subseteq \mathcal{H}$. Then \\
(a) $\lbrace x_n\rbrace$ is called a \emph{K-frame} for $\mathcal{H}$ if there exist constants $A,B>0$ such that
\begin{eqnarray*}
A\Vert K^*x\Vert^2\leq\sum\limits_{n=1}^{\infty}\vert\langle x,x_n\rangle\vert^2\leq B\Vert x\Vert^2, \ \text{for all} \ x\in \mathcal{H}.
\end{eqnarray*}
(b) $\lbrace x_n\rbrace$ is called an \emph{atomic system for $K$} if
\begin{enumerate}[(i)]
\item the series $\sum\limits_{n=1}^{\infty}c_nx_n$ converges for all $c=\lbrace c_n\rbrace\in l^2$.
\item there exists $C>0$ such that for every $x\in \mathcal{H}$ there exists $\lbrace a_n\rbrace\in l^2$ such that $\Vert \lbrace a_n\rbrace\Vert_{l^2}\leq C\Vert x\Vert$ and $K(x)=\sum\limits_{n=1}^{\infty}a_nx_n.$
\end{enumerate}
\end{defn}
 Shah \cite{SS} defined and studied approximative K-atomic decompositions and approximative $\mathcal{X}_d$-frames in Banach spaces. For further studies related to this concept one may refer \cite{SH, n}.

Next, we give some results which we will use throughout this manuscript.
\begin{defn}
\cite{16} Let $T\in L(\mathcal{X})$, we say that an operator S$\in L(\mathcal{X})$ is a pseudoinverse of $T$ if $TST=T$. Also, $S\in L(\mathcal{X})$ is the generalized inverse of $T$ if $TST=T$ and $STS=S$.
\end{defn}
\begin{lem}\cite{22}\label{PI} Let $\mathcal{X}$, $\mathcal{Y}$ be Banach spaces and $T:\mathcal{X} \longrightarrow \mathcal{Y}$ be a bounded linear operator. Then the following statements are equivalent:
\begin{enumerate}
\item There exist two continuous projection operators $P:\mathcal{X}\rightarrow \mathcal{X}$ and $Q:\mathcal{Y} \rightarrow \mathcal{Y}$ such that
\begin{eqnarray}\label{1E1}
P( \mathcal{X})=kerT \quad and \quad Q(\mathcal{Y})=T( \mathcal{X}).
\end{eqnarray}
\item There exist closed subspaces $W$ and $Z$ such that $\mathcal{X}=KerT\oplus W$ and $\mathcal{Y}=T(\mathcal{X})\oplus Z.$
\item $T$ has a pseudo inverse operator $T^\dag$.
\end{enumerate}
If two continuous projection operators $P:\mathcal{X} \rightarrow \mathcal{X}$ and $Q:\mathcal{Y} \rightarrow \mathcal{Y}$ satisfies (\ref{1E1}), then there exists a pseudo inverse operator $T^\dag$ of $T$ such that \begin{equation*}
T^\dag T=I_{\mathcal{X}}-P \ and \ TT^\dag=Q,
\end{equation*}
where $I_{\mathcal{X}}$ is the identity operator on $\mathcal{X}$.
\end{lem}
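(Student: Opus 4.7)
The plan is to close the loop $(3)\Rightarrow(1)\Rightarrow(2)\Rightarrow(3)$, and to read the final formulas $T^{\dag}T=I_{\mathcal{X}}-P$ and $TT^{\dag}=Q$ directly off the last implication. The only non-bookkeeping ingredient is the open mapping theorem, which is needed to invert $T$ on a chosen complement of $\ker T$.

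For $(3)\Rightarrow(1)$, I would set $P:=I_{\mathcal{X}}-T^{\dag}T$ and $Q:=TT^{\dag}$. From $TT^{\dag}T=T$ one computes $Q^{2}=TT^{\dag}TT^{\dag}=TT^{\dag}=Q$ and $(T^{\dag}T)^{2}=T^{\dag}(TT^{\dag}T)=T^{\dag}T$, so both are continuous idempotents, hence projections. The range identities come out immediately: $Q(\mathcal{Y})\subseteq T(\mathcal{X})$ trivially, while for $y=Tx$ one has $Qy=TT^{\dag}Tx=Tx=y$, so $Q(\mathcal{Y})=T(\mathcal{X})$; dually, $TP=T-TT^{\dag}T=0$ forces $P(\mathcal{X})\subseteq\ker T$, and $x\in\ker T$ gives $Px=x$, so $P(\mathcal{X})=\ker T$.

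For $(1)\Leftrightarrow(2)$, one takes $W:=(I_{\mathcal{X}}-P)(\mathcal{X})$ and $Z:=(I_{\mathcal{Y}}-Q)(\mathcal{Y})$; both are closed as ranges of continuous projections, and $P,Q$ furnish the topological direct sums $\mathcal{X}=\ker T\oplus W$ and $\mathcal{Y}=T(\mathcal{X})\oplus Z$. Conversely, given such decompositions with closed summands, the algebraic projections onto the first summands are bounded by the closed graph theorem applied to Banach-space direct sums.

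For $(2)\Rightarrow(3)$, the crux is the open mapping theorem applied to the restriction $\widetilde{T}:=T|_{W}:W\to T(\mathcal{X})$: it is continuous, injective (because $W\cap\ker T=\{0\}$), surjective (each $Tx$ equals $T$ of the $W$-component of $x$), and both $W$ and $T(\mathcal{X})$ are Banach spaces (the latter is closed by hypothesis), so $\widetilde{T}$ has a bounded inverse $\widetilde{T}^{-1}$. Writing $Q$ for the continuous projection of $\mathcal{Y}$ onto $T(\mathcal{X})$ along $Z$, I would define $T^{\dag}:=\widetilde{T}^{-1}\circ Q$. Then $TT^{\dag}y=\widetilde{T}\widetilde{T}^{-1}Qy=Qy$, and splitting $x=x_{0}+w$ with $x_{0}\in\ker T$, $w\in W$ yields $T^{\dag}Tx=\widetilde{T}^{-1}(QTw)=\widetilde{T}^{-1}(Tw)=w=(I_{\mathcal{X}}-P)x$, so $TT^{\dag}Tx=Tw=Tx$. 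Hence $T^{\dag}$ is a pseudo-inverse and the two announced identities $T^{\dag}T=I_{\mathcal{X}}-P$, $TT^{\dag}=Q$ drop out for free. The main obstacle is exactly this last step, as it silently relies on $T(\mathcal{X})$ being closed in $\mathcal{Y}$ so that the open mapping theorem applies; everything else reduces to routine algebra with idempotents.
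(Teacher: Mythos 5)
The paper gives no proof of this lemma --- it is quoted verbatim from \cite{22} --- so there is nothing internal to compare with; judged on its own, your argument is the standard one and is essentially sound: $(3)\Rightarrow(1)$ by the idempotent computations with $P=I_{\mathcal{X}}-T^{\dag}T$ and $Q=TT^{\dag}$ (using $TT^{\dag}T=T$ for the range identities), $(1)\Rightarrow(2)$ by taking the complementary ranges $W=(I_{\mathcal{X}}-P)(\mathcal{X})$, $Z=(I_{\mathcal{Y}}-Q)(\mathcal{Y})$, and $(2)\Rightarrow(3)$ by inverting $T|_{W}:W\to T(\mathcal{X})$ via the bounded inverse theorem and setting $T^{\dag}=(T|_{W})^{-1}Q$, which simultaneously produces the announced identities $T^{\dag}T=I_{\mathcal{X}}-P$ and $TT^{\dag}=Q$ (for the final claim one should add the one-line remark that a continuous projection is determined by its range and kernel, so the projections built from $W$ and $Z$ are the given $P$ and $Q$).

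The one step you have not actually justified is the parenthetical ``closed by hypothesis'': statement (2) asserts only that $W$ and $Z$ are closed, not that $T(\mathcal{X})$ is, and you need closedness of $T(\mathcal{X})$ twice --- once so that the closed graph theorem yields continuity of the projection of $\mathcal{Y}$ onto $T(\mathcal{X})$ along $Z$ (the closed-graph argument for a projection needs both summands closed; with only one summand closed the projection can be unbounded, e.g.\ onto the kernel of a discontinuous functional along a line), and once so that $T(\mathcal{X})$ is a Banach space and the open mapping theorem applies to $T|_{W}$. The gap is genuine as written but easily filled: the map $\Phi:\mathcal{X}/\ker T\times Z\to\mathcal{Y}$ defined by $\Phi([x],z)=Tx+z$ is a bounded linear bijection (injectivity uses $T(\mathcal{X})\cap Z=\{0\}$, surjectivity uses $T(\mathcal{X})+Z=\mathcal{Y}$), hence an isomorphism by the open mapping theorem, and consequently $T(\mathcal{X})=\Phi\bigl((\mathcal{X}/\ker T)\times\{0\}\bigr)$ is closed in $\mathcal{Y}$. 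Inserting this observation at the start of your treatment of (2) makes both the closed-graph step and the bounded-inverse step legitimate, and the rest of your proof goes through unchanged.
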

\begin{lem}\label{5}
\cite{2,19} Let $\mathcal{X}$ be a Banach space. If $T\in L(\mathcal{X})$ has a generalized inverse $S\in L(\mathcal{X}).$ Then $TS$,$ST$ are projections and $TS(\mathcal{X})=T(\mathcal{X})$ and $ST(\mathcal{X})=S(\mathcal{X}).$
\end{lem}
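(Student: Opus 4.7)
The plan is to unpack the two defining identities of a generalized inverse, namely $TST=T$ and $STS=S$, and show that all four conclusions follow by direct algebraic manipulation. No functional-analytic machinery (boundedness, closed graph, Hahn--Banach) is needed beyond the fact that $T,S \in L(\mathcal{X})$ are linear, so I would treat this as essentially a computational lemma.

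First I would verify idempotency. For $TS$, I would write $(TS)(TS) = (TST)S$ and apply $TST=T$ to collapse this to $TS$. For $ST$, I would symmetrically write $(ST)(ST) = S(TST) = ST$ using the same identity. Alternatively one could invoke $STS=S$ via $(ST)(ST) = (STS)T = ST$; both work, so I will present whichever reads more cleanly. Since $TS, ST \in L(\mathcal{X})$ are bounded linear idempotents, they are continuous projections.

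Next I would establish the two range equalities by a double-inclusion argument. The inclusion $TS(\mathcal{X}) \subseteq T(\mathcal{X})$ is immediate since $TSx = T(Sx)$ lies in the range of $T$. For the reverse, take any $y \in T(\mathcal{X})$, write $y=Tx$, and compute $y = Tx = (TST)x = TS(Tx) \in TS(\mathcal{X})$. The argument for $ST(\mathcal{X}) = S(\mathcal{X})$ is verbatim the same with the roles of $S$ and $T$ interchanged, using $STS=S$ in place of $TST=T$: given $y = Sx \in S(\mathcal{X})$, we have $y = (STS)x = ST(Sx) \in ST(\mathcal{X})$.

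I do not anticipate any real obstacle here; the lemma is essentially bookkeeping with the two identities $TST=T$ and $STS=S$. The only thing to be careful about is to use the \emph{correct} identity at the right place: $TST=T$ is what makes $TS$ idempotent and what recovers $T(\mathcal{X}) \subseteq TS(\mathcal{X})$, while $STS=S$ is what makes $ST$ idempotent (via the alternative route) and what gives $S(\mathcal{X}) \subseteq ST(\mathcal{X})$. Writing the four assertions in parallel, in the order (idempotency of $TS$, range of $TS$, idempotency of $ST$, range of $ST$), should make the symmetry transparent and keep the proof to a few short lines.
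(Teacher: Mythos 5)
Your proof is correct: the idempotency computations $(TS)(TS)=(TST)S=TS$ and $(ST)(ST)=S(TST)=ST$, together with the double-inclusion arguments using $y=Tx=TS(Tx)$ and $y=Sx=ST(Sx)$, establish all four claims without any gaps. The paper itself states this lemma as a cited result from the literature (Caradus; Taylor--Lay) and gives no proof, so your argument simply supplies the standard verification that those sources use.
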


\begin{lem}\label{6}
\cite{15} Let $\mathcal{X}$ and $\mathcal{Y}$ be Banach spaces. Let $L:\mathcal{X} \rightarrow \mathcal{X}$ and $L_1:\mathcal{X} \rightarrow \mathcal{Y}$ be linear operators. Then the following conditions are equivalent:
\begin{enumerate}[(a)]
\item $L=L_2L_1$, for some continuous linear operator $L_2:L_1(\mathcal{X})\rightarrow \mathcal{X}$.
\item $\Vert L(\mathcal{X})\Vert\leq k\Vert L_1(\mathcal{X})\Vert$, for some $k\geq 0$ and for all $x\in \mathcal{X}$.
\item $Range(L^*)\subseteq Range(L_1^{*})$.
\end{enumerate}
\end{lem}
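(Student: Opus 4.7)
My plan is to establish the equivalences cyclically: (a)$\Rightarrow$(b)$\Rightarrow$(c)$\Rightarrow$(a). The first step is immediate, since $L=L_2L_1$ with bounded $L_2$ yields $\|L(x)\|=\|L_2L_1(x)\|\leq \|L_2\|\,\|L_1(x)\|$ for all $x\in\mathcal{X}$.

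For (b)$\Rightarrow$(c) I would fix $z^{*}\in\mathcal{X}^{*}$ and define a functional $\phi$ on $L_{1}(\mathcal{X})\subseteq\mathcal{Y}$ by $\phi(L_{1}(x)):=\langle L(x),z^{*}\rangle$. Hypothesis (b) does double duty here: it makes $\phi$ well defined (if $L_{1}(x)=L_{1}(y)$ then $L(x)=L(y)$) and continuous with $|\phi(L_{1}(x))|\leq k\|z^{*}\|\,\|L_{1}(x)\|$. Hahn--Banach then extends $\phi$ to some $\tilde\phi\in\mathcal{Y}^{*}$, and the identity $\langle x,L_{1}^{*}\tilde\phi\rangle=\langle L_{1}(x),\tilde\phi\rangle=\langle L(x),z^{*}\rangle=\langle x,L^{*}z^{*}\rangle$ gives $L_{1}^{*}\tilde\phi=L^{*}z^{*}$, whence $R(L^{*})\subseteq R(L_{1}^{*})$.

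The real work lies in (c)$\Rightarrow$(a). As a preliminary step I verify $\ker L_{1}\subseteq\ker L$: for $x\in\ker L_{1}$ and any $z^{*}\in\mathcal{X}^{*}$, writing $L^{*}z^{*}=L_{1}^{*}y^{*}$ via (c) gives $\langle L(x),z^{*}\rangle=\langle L_{1}(x),y^{*}\rangle=0$, so $L(x)=0$. Hence $L_{2}\colon L_{1}(\mathcal{X})\to\mathcal{X}$ defined by $L_{2}(L_{1}(x)):=L(x)$ is unambiguous and linear. The delicate point is showing $L_{2}$ is continuous. My strategy is to transfer the quotient norm from $\mathcal{Y}^{*}/\ker L_{1}^{*}$ to $R(L_{1}^{*})$ via the canonical bijection $[y^{*}]\mapsto L_{1}^{*}y^{*}$; since $\ker L_{1}^{*}$ is closed, this turns $R(L_{1}^{*})$ into a Banach space whose inclusion into $\mathcal{X}^{*}$ is continuous. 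By (c) the operator $L^{*}$ takes values in $R(L_{1}^{*})$, and the closed graph theorem forces $L^{*}\colon\mathcal{X}^{*}\to R(L_{1}^{*})$ (with the quotient norm) to be bounded. Unwinding this boundedness produces a constant $C>0$ such that every $L^{*}z^{*}$ admits a preimage $y^{*}\in\mathcal{Y}^{*}$ under $L_{1}^{*}$ with $\|y^{*}\|\leq C\|z^{*}\|$. A duality estimate then gives
\[
\|L(x)\|=\sup_{\|z^{*}\|\leq 1}|\langle L_{1}(x),y^{*}\rangle|\leq C\|L_{1}(x)\|,
\]
which is (b) with $k=C$, and already shows $L_{2}$ is continuous with $\|L_{2}\|\leq C$, completing the cycle. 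The main obstacle is this closed graph argument, because without boundedness of $L_{2}$ the factorisation $L=L_{2}L_{1}$ is only an algebraic statement.
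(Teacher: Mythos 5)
The paper never proves this lemma: it is imported verbatim as Lemma \ref{6} from Embry's factorization theorem (reference \cite{15}), so there is no in-text argument to compare yours against; what you have written is a self-contained proof of the cited result, and it is correct. The step (a)$\Rightarrow$(b) is immediate; your (b)$\Rightarrow$(c) via the functional $\phi(L_1x)=z^*(Lx)$, which (b) makes well defined and bounded on $L_1(\mathcal{X})$, followed by a Hahn--Banach extension $\tilde\phi$ with $L_1^*\tilde\phi=L^*z^*$, is exactly right; and your (c)$\Rightarrow$(a) is a sound instance of the standard quotient/closed-graph technique: $\ker L_1^*$ is norm closed, so transferring the quotient norm of $\mathcal{Y}^*/\ker L_1^*$ to $Range(L_1^*)$ through the injective map $[y^*]\mapsto L_1^*y^*$ makes the range a Banach space with continuous inclusion into $\mathcal{X}^*$, and the closed graph theorem (using that $L^*$ is already bounded into $\mathcal{X}^*$, which is Hausdorff) bounds $L^*$ into this renormed range; unwinding gives the majorization $\|Lx\|\leq C\|L_1x\|$ and hence continuity of $L_2(L_1x):=Lx$ on $L_1(\mathcal{X})$, whose well-definedness you correctly secured by checking $\ker L_1\subseteq\ker L$ from (c). Two small points of hygiene: the quotient norm is an infimum that need not be attained, so you should choose $y^*$ with $\|y^*\|\leq (C+\varepsilon)\|z^*\|$ and let $\varepsilon\to 0$ (or accept a marginally larger constant); and the lemma's ``linear operators'' must be read as bounded linear operators, as in Embry, since both the statement and your argument use the adjoints $L^*$ and $L_1^*$ and the boundedness of $L^*$ in the closed-graph step.
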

\begin{lem}\label{7}
\cite{1} Assume $T\in B(\mathcal{X},\mathcal{Y})$. If $S\in B(\mathcal{X},\mathcal{Z})$ with $Range(S^*)\subseteq Range(T^*)$ and $\overline{Range(T )}$ is complemented, then there
exists $V\in B(\mathcal{Y},\mathcal{Z})$ such that $S = VT.$
\end{lem}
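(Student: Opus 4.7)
The plan is to combine Lemma~\ref{6} with the complementedness hypothesis. Lemma~\ref{6} (applied with codomain $\mathcal{Z}$ in place of $\mathcal{X}$) converts the inclusion $\mathrm{Range}(S^*)\subseteq\mathrm{Range}(T^*)$ into a factorization $S=V_0 T$ for some continuous $V_0:T(\mathcal{X})\to\mathcal{Z}$; but $V_0$ is defined only on $T(\mathcal{X})$, and the complementedness of $\overline{\mathrm{Range}(T)}$ is exactly the structural ingredient needed to promote $V_0$ to a bounded operator on all of $\mathcal{Y}$.

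Concretely, I would first apply Lemma~\ref{6} with $L=S$ and $L_1=T$: condition~(c) there is exactly our hypothesis, so (c)$\Rightarrow$(a) yields a continuous linear $V_0:T(\mathcal{X})\to\mathcal{Z}$ with $S=V_0T$. Since $V_0$ is bounded on the dense subspace $T(\mathcal{X})$ of $\overline{T(\mathcal{X})}$ and $\mathcal{Z}$ is complete, $V_0$ extends uniquely to a bounded linear $\widetilde V_0:\overline{T(\mathcal{X})}\to\mathcal{Z}$ with the same norm. By the complementedness hypothesis, pick a bounded projection $P\in B(\mathcal{Y})$ with $P(\mathcal{Y})=\overline{T(\mathcal{X})}$, and define $V:=\widetilde V_0\circ P\in B(\mathcal{Y},\mathcal{Z})$. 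For each $x\in\mathcal{X}$ we have $Tx\in\overline{T(\mathcal{X})}$, so $P(Tx)=Tx$, and therefore $VTx=\widetilde V_0(Tx)=V_0(Tx)=Sx$, giving $S=VT$ as required.

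I do not anticipate a serious technical obstacle; the proof is a gluing of two standard facts. The point that demands care is the necessity of complementedness: without a bounded projection onto $\overline{T(\mathcal{X})}$, the operator $\widetilde V_0$ need not admit any bounded linear extension to all of $\mathcal{Y}$, so this hypothesis is genuinely used rather than cosmetic. One should also note that Lemma~\ref{6} is stated in the excerpt with both $L$ and $L_2$ mapping into $\mathcal{X}$; we use the evident generalization to arbitrary Banach codomains, whose proof is identical.
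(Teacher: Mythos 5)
Your proof is correct. The paper itself gives no argument for Lemma \ref{7} --- it is simply quoted from Barnes \cite{1} --- and your route (Lemma \ref{6} with general codomain to get $S=V_0T$ on $T(\mathcal{X})$, then extension by continuity to $\overline{T(\mathcal{X})}$ and composition with the bounded projection supplied by the complementedness hypothesis) is exactly the standard argument of the cited source, so there is nothing to add.
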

\begin{lem}\label{8}
\cite{4} Let $\mathcal{X}_d$ be a BK-space for which the canonical unit vectors $\lbrace e_n\rbrace$ form a Schauder basis. Then the space $\mathcal{Y}_d=\lbrace\lbrace h(e_n)\rbrace\vert h\in \mathcal{X}_d^*\rbrace$ with norm $\Vert\lbrace h(e_n)\rbrace\Vert_{\mathcal{Y}_d}=\Vert h\Vert_{\mathcal{X}_d^*}$ is a BK-space isometrically isomorphic to $\mathcal{X}_d^*$. Also, every continuous linear functional $\Phi$ on $\mathcal{X}_d$ has the form
$\Phi\lbrace c_n\rbrace=\sum\limits_{n=1}^{\infty}c_nd_n,$
where $\lbrace d_n\rbrace\in \mathcal{Y}_d$ is uniquely determined by $d_n=\Phi(e_n)$, and
$\Vert \Phi\Vert=\Vert\lbrace \Phi(e_n)\rbrace\Vert_{\mathcal{Y}_d}$.
\end{lem}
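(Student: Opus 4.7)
The plan is to use the Schauder basis property of $\{e_n\}$ in $\mathcal{X}_d$ as the only real ingredient and then turn everything else into bookkeeping. Since every $c=\{c_n\}\in\mathcal{X}_d$ admits a unique expansion $c=\sum_{n=1}^\infty c_n e_n$ converging in $\mathcal{X}_d$, continuity of any $h\in\mathcal{X}_d^*$ immediately forces
\[
h(c)=h\!\left(\sum_{n=1}^\infty c_n e_n\right)=\sum_{n=1}^\infty c_n\, h(e_n),
\]
which gives the representation $\Phi\{c_n\}=\sum c_n d_n$ with $d_n=\Phi(e_n)$. Uniqueness of $\{d_n\}$ is forced by the requirement $d_n=\Phi(e_n)$.

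Next I would introduce the map $\Psi:\mathcal{X}_d^*\to\mathcal{Y}_d$ defined by $\Psi(h)=\{h(e_n)\}$. It is linear and surjective by the very definition of $\mathcal{Y}_d$, and injective because $\Psi(h)=0$ means $h$ annihilates all $e_n$, hence annihilates their closed linear span, which is $\mathcal{X}_d$. The norm on $\mathcal{Y}_d$ is defined so that $\|\Psi(h)\|_{\mathcal{Y}_d}=\|h\|_{\mathcal{X}_d^*}$; thus $\Psi$ is a surjective linear isometry, and transport of structure makes $\mathcal{Y}_d$ a Banach space isometrically isomorphic to $\mathcal{X}_d^*$. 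The final equality $\|\Phi\|=\|\{\Phi(e_n)\}\|_{\mathcal{Y}_d}$ is then just the isometry property, applied with $\Phi$ in place of $h$.

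It then only remains to verify the BK-property of $\mathcal{Y}_d$, i.e.\ continuity of the coordinate functionals $\{d_n\}\mapsto d_n$. Suppose $\{d_n^{(k)}\}\to\{d_n\}$ in $\mathcal{Y}_d$, and let $h_k,h\in\mathcal{X}_d^*$ be the corresponding functionals under $\Psi^{-1}$. Then $\|h_k-h\|_{\mathcal{X}_d^*}\to 0$ by the isometry, so for every fixed $n$,
\[
|d_n^{(k)}-d_n|=|(h_k-h)(e_n)|\le\|h_k-h\|_{\mathcal{X}_d^*}\|e_n\|_{\mathcal{X}_d}\longrightarrow 0,
\]
which is exactly the BK-condition.

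There is no really hard step; the only point that requires care is recognizing that the Schauder basis hypothesis is used twice (once to write $h$ as a series in the $h(e_n)$, and once to conclude injectivity of $\Psi$), and that the norm on $\mathcal{Y}_d$ is not something to verify but a transport of the dual norm, which lets the isometric isomorphism, the Banach-space property, and the norm formula $\|\Phi\|=\|\{\Phi(e_n)\}\|_{\mathcal{Y}_d}$ all fall out simultaneously.
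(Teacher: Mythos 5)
The paper offers no proof of this lemma: it is quoted directly from Casazza, Christensen and Stoeva \cite{4}, so there is no internal argument to compare yours against. Your proof is correct and is the standard one: the Schauder-basis expansion yields the series representation of each functional, injectivity of $h\mapsto\{h(e_n)\}$ (density of the linear span of the $e_n$) makes the transported norm on $\mathcal{Y}_d$ well defined and carries completeness over from $\mathcal{X}_d^*$, and the estimate $|d_n^{(k)}-d_n|\le\|h_k-h\|_{\mathcal{X}_d^*}\,\|e_n\|_{\mathcal{X}_d}$ gives the BK property. The one step worth making explicit is the identity $c=\sum_{n=1}^{\infty}c_n e_n$ for $c=\{c_n\}\in\mathcal{X}_d$: that the Schauder-basis coefficients of $c$ coincide with its coordinates as a sequence is exactly where the BK hypothesis on $\mathcal{X}_d$ enters (apply the continuous coordinate functionals to the partial sums), and without that remark the first display is asserted rather than proved.
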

Terekhin \cite{20} introduced and studied frames in Banach spaces.
\begin{defn}
\cite{20} Let $\mathcal{X}$ be a Banach space and $\mathcal{X}_d$ be a BK-space with the sequence of canonical unit vectors $\lbrace e_n\rbrace$ as basis. Let $Y_d$ be a sequence space mentioned in Lemma \ref{8}. A sequence $\lbrace x_n \rbrace_{n=1}^{\infty}\subseteq \mathcal{X}$ is called a frame with respect to $\mathcal{X}_d$ if
\begin{enumerate}[(a)]
\item $\lbrace f(x_n)\rbrace\in Y_d,$~ for all $f\in \mathcal{X}^*$,
\item there exist constants $A$ and $B$ with $0<A\leq B<\infty$ such that
\begin{eqnarray}\label{1}
A\Vert f\Vert_{\mathcal{X}^*}\leq\Vert\lbrace f(x_n)\rbrace\Vert_{\mathcal{Y}_d}\leq B\Vert f\Vert_{\mathcal{X}^*}, \ \text{for  all} \ f\in \mathcal{X}^*.
\end{eqnarray}
\end{enumerate}
We refer (\ref{1}) as the \emph{frame inequality}. If at least (a) and the upper bound condition in (\ref{1}) are satisfied, then $\lbrace x_n\rbrace$ is called a \emph{Bessel sequence} for $\mathcal{X}$ with respect to $\mathcal{X}_d$.
A frame $\lbrace x_n\rbrace$ is called exact if on removal of its one element $x_n$, it is no longer a frame for $\mathcal{X}$.
\end{defn}

\section{Approximative atomic system for operators}

 Gavruta \cite{12}, introduced and studied atomic system for an operator $K$ in Hilbert spaces. Poumai and Jahan \cite{KS} defined and studied atomic systems for operators in Banach spaces as a generalization of atomic system in Hilbert spaces. Here we have generalize this concept further and introduce the concept of approximative atomic systems for operators in  Banach spaces and obtain new and interesting results. We starts this section with the following definition of approximative atomic system for K:
\begin{defn}
Let $\mathcal{X}$ be a Banach Space and $K$ be a bounded linear operator in $\mathcal{X}$. A sequence $\lbrace x_n\rbrace\subseteq \mathcal{X}$ is called an \emph{approximative atomic system for K} in $\mathcal{X}$ with respect to $\mathcal{X}_d$ if
\begin{enumerate}[(a)]
\item $\lbrace x_n\rbrace$ is a Bessel sequence for $\mathcal{X}$ with respect to $\mathcal{X}_d,$
\item there exists an approximative $\mathcal{X}_d$-Bessel Sequence $\{h_{n,i}\}\underset{n \in \mathbb{N}}{_{i=1,2,3,...,m_n}}\subset \mathcal{X}^*$ for $\mathcal{X}$ such that
\begin{eqnarray*}
K(x)=\lim\limits_{n \rightarrow \infty}\sum\limits_{i=1}^{m_n}h_{n,i}(x)x_i, \ \text{for all} \ x\in \mathcal{X}.
\end{eqnarray*}
\end{enumerate}
\indent The sequence $\{h_{n, i}\}\underset{n \in \mathbb{N}}{_{i=1,2,3,...,m_n}}$ is called the associated approximative $\mathcal{X}_d$-Bessel sequence.
\end{defn}
\textbf{Observation}: If $\{x_n\}$ is an atomic system for $K$ in $\mathcal{X}$ with respect to $\mathcal{X}_d,$ then for $h_{n,i}=f_i,~~i=1,2,3,...,n,~~n \in \mathbb{N},$ $\{x_n\}$ is an approximative atomic system for $K.$
\begin{defn}
Let $\mathcal{X}$ be a Banach Space and $M$ be a closed subspace of $\mathcal{X}$. A sequence $\lbrace x_n \rbrace\subseteq \mathcal{X}$ is called \emph{an approximative family of local atoms} for $M$ if
\begin{enumerate}[(a)]
\item $\lbrace x_n \rbrace_{n=1}^{\infty}$ be a Bessel sequence for $\mathcal{X}$ with respect to $\mathcal{X}_d,$
\item there exists an approximative $\mathcal{X}_d$-Bessel Sequence $\{h_{n,i}\}\underset{n \in \mathbb{N}}{_{i=1,2,3,...,m_n}}\subset \mathcal{X}^*$ for $M$ such that
\begin{eqnarray*}
x=\lim\limits_{n \rightarrow \infty}\sum\limits_{i=1}^{m_n}h_{n,i}(x)x_i, \ \text{for all} \ x\in M.
\end{eqnarray*}
\end{enumerate}
The sequence $\{h_{n,i}\}\underset{n \in \mathbb{N}}{_{i=1,2,3,...,m_n}}$ is called the associated approximative $\mathcal{X}_d$-Bessel sequence.
\end{defn}
\textbf{Observations}~(I) If $\{x_n\}$ is a family of local atoms for $M$, then for $h_{n,i}=f_i,~~i=1,2,3,...,n,~~n \in \mathbb{N},$ $\{x_n\}$ is an approximative family of local atoms for $M.$\\
\\~~~(II)
Let $\mathcal{X}$ be a Banach space and $M$ be a closed subspace of $\mathcal{X}$. Let $\lbrace x_n \rbrace$ be an approximative family of local atoms for $M$ with respect to $\mathcal{X}_d$ and let $\{h_{n,i}\}\underset{n \in \mathbb{N}}{_{i=1,2,3,...,m_n}}$ be its associated approximative $\mathcal{X}_d$-Bessel sequence with bound $B$. Then, $(x_n, \{h_{n,i}\}\underset{n \in \mathbb{N}}{_{i=1,2,3,...,m_n}})$ is an approximative atomic decomposition for $M$ with respect to $\mathcal{X}_d$ and $\lbrace x_n\rbrace$ is a frame for $M$.
 Indeed, by given hypotheses,
\begin{eqnarray*}
x=\lim\limits_{n \rightarrow \infty}\sum\limits_{i=1}^{m_n}h_{n,i}(x)x_i, \text{for all} \ x \in M.
\end{eqnarray*}
Let $A$ be the bound of Bessel sequence $\lbrace x_n\rbrace.$ Then for $x \in M$
\begin{eqnarray*}
\Vert x\Vert&=&\sup\limits_{h\in  \mathcal{X}^*,\Vert h\Vert=1}\vert \lim\limits_{n \rightarrow \infty}\sum\limits_{i=1}^{m_n}h_{n,i}(x)h(x_n)\vert
\leq A\Vert\lbrace h_{n,i}(x)\rbrace\Vert
\end{eqnarray*}
With Similar argument, we have
\begin{eqnarray*}
1/B\Vert f\Vert\leq\Vert\lbrace f(x_n)\rbrace\Vert, \ \text{for all} \ f\in M^*.
\end{eqnarray*}
\\~(III)
Let $\mathcal{X}$ be a Banach space and $K\in L(\mathcal{X})$. If $\lbrace x_n\rbrace$ is an approximative atomic system for $K$ in $\mathcal{X}$. Then, there exist constants $C,D>0$ such that
\begin{eqnarray*}
C\Vert K(x)\Vert\leq\Vert\lbrace h_{n,i}(x)\rbrace\Vert, \text{for all} \ x\in \mathcal{X},
\end{eqnarray*} and
\begin{eqnarray*}
D\Vert K^*(f)\Vert\leq\Vert\lbrace h(x_n)\rbrace\Vert, \text{for all} \ h\in \mathcal{X}^*.
\end{eqnarray*}
\\~(IV)
$\lbrace x_n\rbrace\subseteq \mathcal{X}$ is a Bessel sequence for $\mathcal{X}$ with respect to $\mathcal{X}_d$ if and only if there exists a bounded linear operator $T$ from $\mathcal{X}_d$ into $\mathcal{X}$ for which $T\lbrace h_n\rbrace=\sum\limits_{n=1}^{\infty}h_nx_n$, $\lbrace h_n\rbrace\in \mathcal{X}_d.$ Recall that $T$ is called the \emph{synthesis operator} associated with the Bessel sequence $\{x_n\},$ and the bounded linear operator $R:\mathcal{X}^*\rightarrow \mathcal{Y}_d$ given by
\begin{eqnarray*}
R(h)=\lbrace h(x_n)\rbrace, \ \text{for} \ h\in \mathcal{X}^*,
\end{eqnarray*}
is called the \emph{analysis operator} of Bessel sequence $\lbrace x_n\rbrace$. Also observe that from Lemma \ref{8}, $\mathcal{Y}_d$ is isometrically isomorphic to $\mathcal{X}_d^*$. Let $J_d:\mathcal{X}_d^*\rightarrow \mathcal{Y}_d$ denote an isometric isomorphism from $\mathcal{X}_d^*$ onto $\mathcal{Y}_d$.
Note that the synthesis operator $T$ need not be onto. Indeed, let $\mathcal{X}_d=\mathcal{X}=l_1$ and let $\lbrace e_n\rbrace$ be a sequence of canonical unit vectors as basis of $\mathcal{X}$. Take $x_n=e_{n+1}$, for $n \in \mathbb{N}$. Let $h=\lbrace h_n\rbrace_{n=1}^\infty\in  \mathcal{X}^*=l^\infty$ then $\lbrace h(x_n)\rbrace\in l^\infty$ and $\Vert\lbrace h(x_n)\rbrace\Vert_{l^\infty}\leq A\Vert h\Vert_{l^\infty}$, where $A>0$ is some constant. Thus, $\lbrace x_n\rbrace$ is Bessel sequence for $\mathcal{X}$. But $T:\mathcal{X}_d\rightarrow \mathcal{X}$ given by $T(e_n)=x_n$, for $n \in \mathbb{N}$ is a bounded linear operator which is not onto.
\\
$\lbrace x_n\rbrace\subseteq \mathcal{X}$ is a frame if and only if there exists a bounded linear operator $T$ from $\mathcal{X}_d$ onto $\mathcal{X}$ for which $T\lbrace c_n\rbrace=\sum\limits_{n=1}^{\infty}c_nx_n$, $\lbrace c_n\rbrace\in \mathcal{X}_d$.

From the frame inequality, $T^*$ is one-one and Range of $T^*$ is closed in $\mathcal{X}_d^*$. So by [\cite{17}, page 103], $T$ is onto.\\
\indent Conversely, if $T$ is onto. Then by [\cite{17}, page 103] $T^*$ is one-one and range of $T^*$ is closed. Also, by [\cite{9}, page 487], there exists a constant $D>0$ such that
\begin{eqnarray*}
\Vert h\Vert\leq D\Vert T^*(h)\Vert=D\Vert \lbrace h(x_n)\rbrace\Vert,\ \text{for all} \ h\in \mathcal{X}^*.
\end{eqnarray*}

In the following result, we construct an approximative family of local atoms for $K(\mathcal{X})$ and an approximative atomic decomposition for $[K(\mathcal{X})]^*$ from a given approximative atomic system for a bounded linear operator $K$.
\begin{thm}\label{T1}
Let $\lbrace x_n\rbrace$ be an approximative atomic system for $K$ in $\mathcal{X}$  and $K$ has pseudo inverse $K^\dagger$. Then, $\lbrace x_n\rbrace$ is an approximative family of local atoms for $K(\mathcal{X})$ in $\mathcal{X}$.
Moreover, if $\mathcal{X}_d^*$ has a sequence of canonical unit vectors $\lbrace e_n^*\rbrace$ as basis, then there exists an approximative $\mathcal{X}_d$-Bessel sequence $\{h_{n,i}\}\underset{n \in \mathbb{N}}{_{i=1,2,3,...,m_n}}\subset \mathcal{X}^*$ for $K(\mathcal{X})$ such that $(h_{n,i},\pi(x_n))$ is an approximative atomic decomposition for $[K(\mathcal{X})]^*$ with respect to $\mathcal{X}_d^*$.
\end{thm}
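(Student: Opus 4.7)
The approach is to use the pseudo-inverse $K^{\dagger}$ to ``transport'' the given approximative data into $K(\mathcal{X})$ and then dualize. Let $\{h_{n,i}\}$ denote the approximative $\mathcal{X}_d$-Bessel sequence associated with the given approximative atomic system $\{x_n\}$, and define
\[
g_{n,i} := (K^{\dagger})^{*} h_{n,i} \in \mathcal{X}^{*}, \qquad g_{n,i}(y) = h_{n,i}(K^{\dagger} y).
\]
This single sequence $\{g_{n,i}\}$ will play the role of the associated Bessel sequence for the family of local atoms in part~(1) as well as (renamed) the new $\{h_{n,i}\}$ in part~(2).

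For the first assertion I would verify both requirements of the definition. Condition~(a) is free: $\{x_n\}$ is Bessel for $\mathcal{X}$ with respect to $\mathcal{X}_d$ since this was already part of being an approximative atomic system for $K$. For~(b), Lemma~\ref{PI} identifies $Q := K K^{\dagger}$ as a continuous projection of $\mathcal{X}$ onto $K(\mathcal{X})$, so that $K K^{\dagger} y = y$ for every $y \in K(\mathcal{X})$. This immediately gives both the Bessel bound
\[
\|\{g_{n,i}(y)\}\|_{\mathcal{X}_d} = \|\{h_{n,i}(K^{\dagger} y)\}\|_{\mathcal{X}_d} \leq B\,\|K^{\dagger}\|\,\|y\|, \qquad y \in K(\mathcal{X}),
\]
and the reconstruction identity
\[
\lim_{n \to \infty} \sum_{i=1}^{m_n} g_{n,i}(y)\, x_i = \lim_{n \to \infty} \sum_{i=1}^{m_n} h_{n,i}(K^{\dagger} y)\, x_i = K(K^{\dagger} y) = y,
\]
as required.

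For the second assertion I would keep the same $\{g_{n,i}\}$ and dualize. Each $\pi(x_i)\in \mathcal{X}^{**}$ is interpreted as a functional on $[K(\mathcal{X})]^{*}$ through the canonical extension $\phi \mapsto \phi\circ Q$ that sends an element of $[K(\mathcal{X})]^{*}$ into $\mathcal{X}^{*}$, so that $\pi(x_i)(\phi) := \phi(Q x_i)$. Applying any $\phi \in [K(\mathcal{X})]^{*}$ to the identity of part~(1) and using continuity of $\phi\circ Q$ gives, for $y\in K(\mathcal{X})$,
\[
\phi(y) = \phi\circ Q\!\left(\lim_{n}\sum_{i=1}^{m_n} h_{n,i}(K^{\dagger} y)\, x_i\right) = \lim_{n}\sum_{i=1}^{m_n} g_{n,i}(y)\,\pi(x_i)(\phi),
\]
which is the reconstruction formula for the pair $(g_{n,i},\pi(x_n))$. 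The $\mathcal{X}_d^{*}$-side frame bounds would then be read off from Lemma~\ref{8}: the canonical-basis hypothesis identifies $\mathcal{X}_d^{*}$ with $\mathcal{Y}_d$, after which the Bessel property of $\{x_n\}$ (Observation~(IV)) provides the upper bound $\|\{\pi(x_i)(\phi)\}\|_{\mathcal{X}_d^{*}}\leq C\|\phi\|$, while the lower bound follows by Hölder-type duality between $\mathcal{X}_d$ and $\mathcal{X}_d^{*}$ combined with the $\mathcal{X}_d$-Bessel estimate for $\{g_{n,i}\}$ proved in part~(1).

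The main obstacle I foresee is reconciling the convergence type in the reconstruction formula for $[K(\mathcal{X})]^{*}$. The partial-sum operators $T_n:x\mapsto \sum_i h_{n,i}(x) x_i$ are only known to converge to $K$ pointwise on $\mathcal{X}$, not in operator norm, so $T_n^{*}\phi\to K^{*}\phi$ is only weak* in $\mathcal{X}^{*}$, not norm. Consequently, the identity displayed above is naturally a pointwise (weak*) identity in $[K(\mathcal{X})]^{*}$, and promoting it to a genuine norm-convergent reconstruction is the delicate step: one has to pair the coefficient sequence $\{\pi(x_i)(\phi)\}\in \mathcal{X}_d^{*}$ against $\{g_{n,i}(y)\}\in \mathcal{X}_d$ via the duality of Lemma~\ref{8} and exploit the uniform bound $\|K^{\dagger}\|$ coming from the pseudo-inverse to obtain the uniform-in-$y$ estimate needed for convergence in $[K(\mathcal{X})]^{*}$.
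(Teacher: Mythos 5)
Your first assertion is argued exactly as in the paper: the paper also takes the transported functionals $f_{n,i}=(K^\dagger|_{K(\mathcal{X})})^*(h_{n,i})$, uses that $KK^\dagger$ is a projection onto $K(\mathcal{X})$ with $KK^\dagger|_{K(\mathcal{X})}=I_{K(\mathcal{X})}$ to get the reconstruction $y=\lim_{n}\sum_{i=1}^{m_n}f_{n,i}(y)x_i$ on $K(\mathcal{X})$, and obtains the Bessel bound $C\Vert K^\dagger\Vert\Vert y\Vert$; your $g_{n,i}$ are these same functionals, so this part is complete and matches the paper.

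The genuine gap is in the ``moreover'' part, and you name it yourself without closing it. An approximative atomic decomposition for $[K(\mathcal{X})]^*$ requires the expansion $h=\lim_{n}\sum_{i=1}^{m_n}h(x_i)\,g_{n,i}$ to converge in the \emph{norm} of $[K(\mathcal{X})]^*$, whereas your argument only yields the pointwise (weak*) identity $\phi(y)=\lim_{n}\sum_{i=1}^{m_n}g_{n,i}(y)\,\pi(x_i)(\phi)$ and then defers ``the delicate step.'' The ingredient you never actually use is the hypothesis that the canonical unit vectors $\lbrace e_n^*\rbrace$ form a basis of $\mathcal{X}_d^*$: in the paper it is invoked not merely to identify $\mathcal{Y}_d$ with $\mathcal{X}_d^*$ (Lemma \ref{8}), but to guarantee that the truncation tails of the coefficient sequence $\lbrace h(x_i)\rbrace\in\mathcal{Y}_d\cong\mathcal{X}_d^*$ vanish in norm. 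Concretely, the paper first proves, by the same duality pairing you sketch, that $\Vert g\Vert\leq C\Vert K^\dagger\Vert\,\Vert\lbrace g(x_i)\rbrace\Vert$ for every $g\in[K(\mathcal{X})]^*$, and then bounds the truncation error uniformly over the unit ball of $K(\mathcal{X})$ by an expression of the form $C\Vert K^\dagger\Vert\,\bigl\Vert\sum_{i>m_N}h(x_i)e_i^*\bigr\Vert_{\mathcal{X}_d^*}$, which tends to $0$ precisely because $\lbrace e_i^*\rbrace$ is a Schauder basis of $\mathcal{X}_d^*$; this is what converts the weak* identity into the claimed norm-convergent decomposition. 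Without this (or some substitute giving a uniform-in-$y$ tail estimate), the second assertion is not proved as written. Your remarks on the frame-type bounds (upper bound from the Bessel property of $\lbrace x_n\rbrace$, lower bound from H\"older-type duality together with the Bessel estimate for $\lbrace g_{n,i}\rbrace$) do agree with the paper's estimates, so the missing piece is solely the norm convergence of the dual expansion.
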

\begin{proof}
Since $\lbrace x_n\rbrace$ is an approximative atomic system for $K$ in $\mathcal{X}$  $K$ has a pseudo inverse $K^\dagger$, $KK^\dagger$ is a projection from $\mathcal{X}$ onto $K(\mathcal{X})$ and
$KK^\dagger(x)=x, \ \text{for all} \ x\in K(\mathcal{X}).$
Thus, $KK^\dagger\vert_{K(\mathcal{X})}=I_{K(\mathcal{X})}$. Let $\lbrace h_{n,i}\rbrace$ be its associated approximative $\mathcal{X}_d$-Bessel sequence with bound C. Take $f_{n,i}=(K^\dagger\vert_{K(\mathcal{X})})^*(h_{n,i})$, $n\in\NN$ and
let $x\in K(\mathcal{X})$. Then, we compute
\begin{eqnarray*}
x&=&K(K^\dagger\vert_{K(\mathcal{X})}(x))
\\
&=&\lim\limits_{n \rightarrow \infty}\sum\limits_{i=1}^{m_n}h_{n,i}(K^\dagger\vert_{K(\mathcal{X})}(x))x_i
\\
&=&\lim\limits_{n \rightarrow \infty}\sum\limits_{i=1}^{m_n}(K^\dagger\vert_{K(\mathcal{X})^*}h_{n,i}(x))x_i
\\
&=&\lim\limits_{n \rightarrow \infty}\sum\limits_{i=1}^{m_n}f_{n,i}(x)x_i
\end{eqnarray*}
\indent Now, we will show that $\lbrace f_{n,i}\rbrace$ is an approximative Bessel sequence for $K(\mathcal{X})$ with respect to $\mathcal{X}_d$. Let $x\in K(\mathcal{X}).$ Then we have
\begin{eqnarray*}
\lbrace f_{n,i}(x)\rbrace=\lbrace h_{n,i}(K^\dagger\vert_{K(\mathcal{X})}(x)\rbrace\in \mathcal{X}_d, \ \text{for all} \ x\in K(\mathcal{X})
\end{eqnarray*}
and
\begin{eqnarray*}
\Vert\lbrace f_{n,i}(x)\rbrace\Vert&=&\Vert\lbrace h_{n,i}(K^\dagger\vert_{K(\mathcal{X})}(x))\rbrace\Vert\leq C\Vert K^\dagger\vert_{K(\mathcal{X})}(x)\Vert
\\
&\leq&C\Vert K^\dagger\Vert\Vert x\Vert, \ \text{for all} \ x\in K(\mathcal{X}).
\end{eqnarray*}
Also, $(K^\dagger\vert_{K(\mathcal{X})})^* K^*=I^*_{[K(\mathcal{X})]^*}$. Let $h \in [K(\mathcal{X})]^*$.  Then
\begin{eqnarray*}
\Vert h\Vert&=&\Vert(K^\dagger\vert_{K(\mathcal{X})})^* K^*(h)\Vert\leq \Vert K^\dagger\Vert\Vert K^*(h)\Vert
\\
&=&\Vert K^\dagger\Vert\sup\limits_{x\in K(\mathcal{X}),\Vert x\Vert=1}\vert h(\lim\limits_{n \rightarrow \infty}\sum\limits_{i=1}^{m_n} h_{n,i}(x)x_i)\vert
\\
&\leq& C\Vert K^\dagger\Vert\Vert \lbrace h(x_i)\rbrace\Vert.
\end{eqnarray*}
Let $h\in [K(\mathcal{X})]^*.$ Then for $N\in \mathbb{N},$ we have
\begin{eqnarray*}
\Vert h-\lim\limits_{n \rightarrow \infty}\sum\limits_{i=1}^{m_n}h(x_i)h_{n,i}\Vert&=&\sup\limits_{x\in K(\mathcal{X}),\Vert x\Vert=1}\vert h(x)- \lim\limits_{n \rightarrow \infty}\sum\limits_{i=1}^{m_n}h(x_i)h_{n,i}(x)\vert
\\
&=&\sup\limits_{x\in K(\mathcal{X}),\Vert x\Vert=1}\vert\lim\limits_{n \rightarrow \infty}\sum\limits_{i=1}^{m_n}h(x_i)h_{n,i}(x)\vert
\\
&\leq&C\Vert K^\dagger\Vert\Vert \lim\limits_{n \rightarrow \infty}\sum\limits_{i=1}^{m_n}h(x_i)e_i^*\Vert
\\
&\rightarrow& 0 \ \text{as N} \rightarrow \infty.
\end{eqnarray*}
\indent Thus, $f=\lim\limits_{n \rightarrow \infty}\sum\limits_{i=1}^{m_n}h(x_i)h_{n,i}$, for all $h\in [K(\mathcal{X})]^*$.
\end{proof}
If $\lbrace x_n\rbrace$ is an approximative atomic system for $K$ and $K$ has pseudo inverse $K^\dagger$, then for $h_{n,i}=f_i$, $\lbrace x_n\rbrace$ is frame as well as an approximative family of local atoms for $K(\mathcal{X})$.\\
The following theorem gives a necessary condition for the existence of an approximative atomic system for a bounded linear operator $K$.
\begin{thm}\label{T2}
If $\lbrace x_n\rbrace$ is an approximative atomic system for $K$, then there exists a bounded linear operator $T: \mathcal{X}_d\rightarrow \mathcal{X}$ such that $T(e_n)=x_n$, $n\in\NN$ and Range$K^{**}\subseteq RangeT^{**}$, where $\lbrace e_n\rbrace$ is the sequence of canonical unit vectors as a basis of $\mathcal{X}_d$.
\end{thm}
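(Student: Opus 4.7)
My plan is to first construct the operator $T$ directly from the Bessel sequence hypothesis and then establish the range inclusion via Lemma \ref{6} by reducing it to a norm inequality on adjoints.

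Since part (a) of the approximative atomic system definition says $\{x_n\}$ is a Bessel sequence for $\mathcal{X}$ with respect to $\mathcal{X}_d$, Observation~(IV) immediately produces a bounded linear synthesis operator $T:\mathcal{X}_d \to \mathcal{X}$ satisfying $T(\{c_n\}) = \sum_n c_n x_n$, and in particular $T(e_n)=x_n$ for every $n$. With $T$ in hand, the remaining task is to verify Range$(K^{**}) \subseteq$ Range$(T^{**})$. I plan to invoke Lemma \ref{6} with $L = K^*:\mathcal{X}^* \to \mathcal{X}^*$ and $L_1 = T^*:\mathcal{X}^* \to \mathcal{X}_d^*$; since $L^*=K^{**}$ and $L_1^* = T^{**}$, the equivalence of conditions (b) and (c) of that lemma reduces the theorem to establishing
\begin{equation*}
\|K^*(f)\|_{\mathcal{X}^*} \leq k\,\|T^*(f)\|_{\mathcal{X}_d^*}, \qquad f \in \mathcal{X}^*,
\end{equation*}
for some constant $k>0$.

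To prove this norm inequality, I will use the approximative atomic system identity. Fixing $f \in \mathcal{X}^*$ and $x \in \mathcal{X}$, set $v_n(x) := \sum_{i=1}^{m_n} h_{n,i}(x)\, e_i \in \mathcal{X}_d$, so that $T(v_n(x)) = \sum_{i=1}^{m_n} h_{n,i}(x) x_i$. Continuity of $f$ combined with the defining identity of the approximative atomic system then gives
\begin{equation*}
|K^*(f)(x)| = |f(K(x))| = \lim_{n\to\infty} |T^*(f)(v_n(x))| \leq \|T^*(f)\|_{\mathcal{X}_d^*}\, \sup_n \|v_n(x)\|_{\mathcal{X}_d}.
\end{equation*}
Provided $\sup_n \|v_n(x)\|_{\mathcal{X}_d} \leq B\|x\|$ for the approximative $\mathcal{X}_d$-Bessel bound $B$ of $\{h_{n,i}\}$, taking the supremum over $\|x\|\leq 1$ yields $\|K^*(f)\| \leq B\|T^*(f)\|$, and Lemma \ref{6} then delivers the desired range inclusion.

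The main obstacle is precisely this uniform bound on the level-$n$ vectors $v_n(x)$. The approximative $\mathcal{X}_d$-Bessel condition is phrased for the full doubly-indexed family $\{h_{n,i}(x)\}_{i,n}$, whereas the argument above needs uniform control of the truncated slice $(h_{n,1}(x),\ldots, h_{n,m_n}(x), 0, 0, \ldots) \in \mathcal{X}_d$ at each level $n$. Extracting this uniform bound demands either pinning down the enumeration convention used to view the doubly-indexed family inside $\mathcal{X}_d$, or alternatively invoking the Banach--Steinhaus theorem applied to the family of finite-rank operators $x \mapsto v_n(x)$, which are pointwise bounded because the approximative series converges for every $x$; this bookkeeping step is the most delicate part of the argument, after which the reduction to Lemma \ref{6} is routine.
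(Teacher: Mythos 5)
Your proposal is correct and follows essentially the same route as the paper: $T$ is the synthesis operator of the Bessel sequence $\{x_n\}$, the reconstruction formula together with the approximative $\mathcal{X}_d$-Bessel bound $B$ gives $\Vert K^*(h)\Vert \leq B\Vert T^*(h)\Vert$ for all $h\in\mathcal{X}^*$, and the equivalence of (b) and (c) in Lemma \ref{6} (the paper's citation of Lemma \ref{7} at this point is really an appeal to Lemma \ref{6}) yields $\mathrm{Range}\,K^{**}\subseteq \mathrm{Range}\,T^{**}$. The bookkeeping you flag, namely the uniform bound $\Vert\sum_{i=1}^{m_n}h_{n,i}(x)e_i\Vert_{\mathcal{X}_d}\leq B\Vert x\Vert$, is exactly what the paper reads off (silently) from the approximative $\mathcal{X}_d$-Bessel condition, so settle it that way rather than by your Banach--Steinhaus fallback, whose stated justification (pointwise boundedness of $x\mapsto v_n(x)$ because the approximative series converges) does not hold up: convergence of $T(v_n(x))$ in $\mathcal{X}$ gives no control on $\Vert v_n(x)\Vert_{\mathcal{X}_d}$.
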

\begin{proof}
Since $\lbrace x_n\rbrace$ is an approximative atomic system for $K$, $T:\mathcal{X}_d\rightarrow \mathcal{X}$ given $T(\lbrace h_{n,i}\rbrace) =\lim\limits_{n \rightarrow \infty}\sum\limits_{i=1}^{m_n}h_{n,i}x_i$ is a well defined bounded linear operator such that $T(e_i)=x_i$, $n\in\NN$. Also, there exists an approximative $\mathcal{X}_d$-Bessel sequence $\lbrace h_{n,i}\rbrace$ for $\mathcal{X}$ with bound $B$ such that
\begin{eqnarray*}
K(x)=\lim\limits_{n \rightarrow \infty}\sum\limits_{i=1}^{m_n}h_{n,i}(x)x_i, \ \text{for all} \ x\in \mathcal{X}.
\end{eqnarray*}
Then, for $h\in \mathcal{X}^{*}$, we have
\begin{eqnarray*}
\Vert K^*(h)\Vert&=&\sup\limits_{x\in \mathcal{X},\Vert x\Vert=1}\vert K^*h(x)\vert=\sup\limits_{x\in \mathcal{X},\Vert x\Vert=1}\vert\lim\limits_{n \rightarrow \infty}\sum\limits_{i=1}^{m_n}h_{n,i}(x)h(x_i)\vert
\\
&\leq&B\Vert \lbrace h(x_i)\rbrace\Vert=B\Vert T^*(h)\Vert.
\end{eqnarray*}
Hence, by Lemma \ref{7}, we have $RangeK^{**}\subseteq RangeT^{**}$.
\end{proof}
Gavruta \cite{12}, had proved that for a separable Hilbert space $\mathcal{H}$ a sequence $\lbrace x_n\rbrace\subseteq \mathcal{H}$ is an atomic system for $K$ if and only if there exists a bounded linear operator $L:l^2\rightarrow \mathcal{H}$ such that $L(e_n)=x_n$ and $Range(K)\subseteq Range(L)$, where $\lbrace e_n\rbrace$ is an orthonormal basis for $l^2$. Towards the converse of the Theorem \ref{T2}, we have the following theorem.
\begin{thm}
Let $\mathcal{X}_d$ be reflexive, $\mathcal{X}_d^*$ has a sequence of canonical unit vectors $\lbrace e_n^*\rbrace$ as basis and $\lbrace x_n\rbrace$ be a sequence in $\mathcal{X}$. Let $K\in L(\mathcal{X})$, $T:\mathcal{X}_d\rightarrow  \mathcal{X}$ be bounded linear operator with $T(e_n)=x_n,~n \in \mathbb{N}$, $RangeK^{**}\subseteq RangeT^{**}$ and $\overline{T^*(\mathcal{X}^*)}$ is complemented subspace of $\mathcal{X}_d^*$. Then $\lbrace x_n\rbrace$ is an approximative atomic system for K.
\end{thm}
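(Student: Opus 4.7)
The plan is to verify the two clauses of the definition of an approximative atomic system for $K$ by first producing a bounded factorization $K = T\circ W$ with $W:\mathcal{X}\to\mathcal{X}_d$, and then reading off the coordinate functionals of $W$ to build the associated approximative $\mathcal{X}_d$-Bessel sequence. Clause~(a), that $\{x_n\}$ is a Bessel sequence for $\mathcal{X}$ with respect to $\mathcal{X}_d$, is immediate: since $T:\mathcal{X}_d\to\mathcal{X}$ is bounded and $T(e_n)=x_n$, basis expansion gives $T(\{c_n\})=\sum_n c_n x_n$ for every $\{c_n\}\in\mathcal{X}_d$, so $T$ is the synthesis operator of $\{x_n\}$ and Observation~(IV) applies.

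For clause~(b), I would apply Lemma~\ref{7} in the roles $T^*:\mathcal{X}^*\to\mathcal{X}_d^*$ and $K^*:\mathcal{X}^*\to\mathcal{X}^*$: the range-inclusion condition $\mathrm{Range}\,K^{**}\subseteq\mathrm{Range}\,T^{**}$ is precisely the dual range-inclusion hypothesis of Lemma~\ref{7} applied to $T^*$ and $K^*$, while $\overline{T^*(\mathcal{X}^*)}$ is complemented by assumption. This yields $V\in B(\mathcal{X}_d^*,\mathcal{X}^*)$ with $K^*=V\,T^*$, and dualising gives $K^{**}=T^{**}\,V^*$. Using reflexivity of $\mathcal{X}_d$, I identify $\mathcal{X}_d^{**}$ canonically with $\mathcal{X}_d$ and define $W:\mathcal{X}\to\mathcal{X}_d$ by $W(x):=V^*(\pi(x))$, a bounded operator with $\|W\|\le\|V\|$. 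Under this identification $T^{**}$ restricted to $\mathcal{X}_d$ coincides with $\pi\circ T$, hence
\[
\pi(K(x))=K^{**}(\pi(x))=T^{**}(W(x))=\pi(T(W(x))),
\]
and injectivity of $\pi$ yields the desired $K=T\circ W$.

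With the factorization in hand, since $\mathcal{X}_d$ is a BK-space with basis $\{e_n\}$, each coordinate functional $c_i:\mathcal{X}_d\to\KK$ is continuous; I set $h_i:=c_i\circ W\in\mathcal{X}^*$, $m_n:=n$, and $h_{n,i}:=h_i$ for $1\le i\le m_n$. The basis expansion $W(x)=\sum_{i=1}^{\infty}h_i(x)e_i$ in $\mathcal{X}_d$ together with continuity of $T$ yields
\[
K(x)=T(W(x))=\lim_{n\to\infty}\sum_{i=1}^{m_n}h_i(x)\,T(e_i)=\lim_{n\to\infty}\sum_{i=1}^{m_n}h_{n,i}(x)\,x_i,
\]
the required reconstruction identity. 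The partial sums $\sum_{i=1}^{m_n}h_{n,i}(x)e_i$ are uniformly bounded in $\mathcal{X}_d$ by the basis constant times $\|W(x)\|_{\mathcal{X}_d}\le\|V\|\,\|x\|$, so $\{h_{n,i}\}$ is an approximative $\mathcal{X}_d$-Bessel sequence and (b) is established.

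The main obstacle I anticipate is the descent from $V^*:\mathcal{X}^{**}\to\mathcal{X}_d^{**}$ to a genuine bounded $W:\mathcal{X}\to\mathcal{X}_d$: this is precisely where the reflexivity of $\mathcal{X}_d$ is essential, and one must track the canonical embeddings carefully to verify the identity $T^{**}\circ\pi_{\mathcal{X}_d}=\pi_{\mathcal{X}}\circ T$ so that the $\pi$'s can be cancelled to yield $K=T\circ W$. Everything downstream -- the BK-space extraction of coordinate functionals and the Bessel-type norm bounds -- is routine.
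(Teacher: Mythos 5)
Your proposal is correct and follows essentially the same route as the paper: both apply Lemma \ref{7} to $T^*$ and $K^*$ (using the range inclusion $\mathrm{Range}\,K^{**}\subseteq \mathrm{Range}\,T^{**}$ and the complementation of $\overline{T^*(\mathcal{X}^*)}$) to obtain $K^*=\theta T^*$, use reflexivity of $\mathcal{X}_d$ to descend to a map into $\mathcal{X}_d$, and take the functionals $h_{n,i}$ to be (in effect) $\theta(e_n^*)$, your $c_i\circ W$ being exactly $V(e_i^*)$. The only cosmetic difference is that you package the argument as an operator factorization $K=TW$ before reading off the coordinate functionals, while the paper verifies the reconstruction identity directly via partial-sum estimates; the substance is identical.
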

\begin{proof}
Since $T:\mathcal{X}_d\rightarrow \mathcal{X}$ is a bounded linear operator and is given by
\begin{eqnarray*}
T(\lbrace \alpha_n\rbrace)=\sum\limits_{n=1}^{\infty}\alpha_nx_n,~~\lbrace \alpha_n\rbrace \in \mathcal{X}_d^*.
\end{eqnarray*}
So, by Observation IV, $\lbrace x_n\rbrace$ is a Bessel sequence with bound say $B$. Since $RangeK^{**}\subseteq RangeT^{**}$ and $\overline{T^*(\mathcal{X}^*)}$ is complemented subspace of $\mathcal{X}_d^*$, by Lemma \ref{7}, there exists a bounded linear operator $\theta:\mathcal{X}_d^*\rightarrow \mathcal{X}^*$ such that $K^*=\theta T^*$.
 Take $h_{n,i},~~\underset{n \in \mathbb{N}}{_{i=1,2,3,...,m_n}}=\theta(e_n^*)$, $n\in\NN$. Then, for $x\in \mathcal{X}$, we get
\begin{eqnarray*}
\theta^*(x)(e_j^*)=\pi(x)\theta(e_j^*)=h_{n,i}(x);i=1,2,...{m_n} \ \text{for} \ n\in\NN.
\end{eqnarray*}
This gives $\lbrace h_{n,i}(x)\rbrace=\theta^*(x)\in \mathcal{X}_d, \ \text{for all} \ x\in \mathcal{X}$
and $\Vert\lbrace h_{n,i}(x)\rbrace\Vert=\Vert\theta^*(x)\Vert\leq\Vert \theta\Vert\Vert x\Vert, \ \text{for all} \ x\in \mathcal{X}.$
Also, for $h\in \mathcal{X}^*,$ we have
\begin{eqnarray*}
K^*(h)=\theta(\lbrace h(x_n)\rbrace)=\theta(\sum\limits_{n=1}^{\infty}h(x_n)e_n^*)
&=&\lim\limits_{n \rightarrow \infty}\sum\limits_{i=1}^{m_n}h(x_i)h_{n,i}.
\end{eqnarray*} and for $x\in \mathcal{X}$ and $n\in \NN,$ we compute
\begin{eqnarray*}
\Vert K(x)-\lim\limits_{n \rightarrow \infty}\sum\limits_{i=1}^{n}h_{n,i}(x)x_i\Vert&=&\sup\limits_{h\in \mathcal{X}^*, \Vert h\Vert=1}\vert h(K(x))-\lim\limits_{n \rightarrow \infty}\sum\limits_{i=1}^{n}h_{n,i}(x)h(x_i)\vert
\\
&=&\sup\limits_{h\in \mathcal{X}^*, \Vert h\Vert=1}\vert \sum\limits_{i=n+1}^{\infty}h_{n,i}(x)h(x_i)\vert
\\
&\leq&B\Vert \sum\limits_{i=n+1}^{\infty}h_{n,i}(x)e_i\Vert\rightarrow 0 \ \text{as} \ n\rightarrow\infty.
\end{eqnarray*}
\indent Hence, $K(x)=\lim\limits_{n \rightarrow \infty}\sum\limits_{i=1}^{m_n}h_{n,i}(x)x_i$, for all $x\in \mathcal{X}$.
\end{proof}
The following theorem gives a complete characterization for the approximative family of local atoms for the closed subspace of a Banach space.
\begin{thm}
Let $\lbrace x_n\rbrace$ be a Bessel sequence for $\mathcal{X}$ with respect to $\mathcal{X}_d$, $\mathcal{X}_d^*$ has a sequence of canonical unit vectors $\lbrace e_n^*\rbrace$ as basis and $M$ be a closed subspace of $\mathcal{X}$. Let there exists a projection P from $\mathcal{X}$ onto M. Then, the following statements are equivalent:
\begin{enumerate}[(a)]
\item $\lbrace x_n\rbrace$ is an approximative family of local atoms for M.
\item $\lbrace x_n\rbrace$ is an approximative atomic system for P.
\item There exists a bounded linear operator $U:M \rightarrow \mathcal{X}_d$ such that $TUP=P$, where $T$ is the synthesis operator of the Bessel sequence $\lbrace x_n\rbrace$.
\end{enumerate}
\end{thm}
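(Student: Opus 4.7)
The natural approach is to establish the two equivalences $(a)\Leftrightarrow(b)$ and $(b)\Leftrightarrow(c)$ separately, since each uses slightly different tools. The only analytic inputs needed are the projection identity $P^{2}=P$, Observation IV (the correspondence between Bessel sequences and bounded synthesis operators $T\colon\mathcal{X}_d\to\mathcal{X}$ with $T(e_n)=x_n$), and the fact that $\mathcal{X}_d$ is a BK-space whose canonical unit vectors form a Schauder basis.

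For $(a)\Rightarrow(b)$ the bridge is pre-composition with $P$. Given an approximative family of local atoms $\{x_n\}$ for $M$ with associated sequence $\{h_{n,i}\}\subset\mathcal{X}^{*}$, I would set $\tilde h_{n,i}:=h_{n,i}\circ P\in\mathcal{X}^{*}$. Since $Px\in M$ for every $x\in\mathcal{X}$, the approximative $\mathcal{X}_d$-Bessel bound on $M$ transfers to one on $\mathcal{X}$ with constant $\|P\|\cdot B$, and the reconstruction formula $P(x)=\lim_{n}\sum_{i=1}^{m_n}h_{n,i}(Px)x_i=\lim_{n}\sum_{i=1}^{m_n}\tilde h_{n,i}(x)x_i$ is automatic. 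For $(b)\Rightarrow(a)$, since $P|_M=I_M$, evaluating the reconstruction for $P$ at $x\in M$ is exactly the local-atom expansion, and the Bessel bound on $\mathcal{X}$ restricts to one on $M$.

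For $(b)\Rightarrow(c)$ I would let $V\colon\mathcal{X}\to\mathcal{X}_d$ be the analysis operator of the associated approximative $\mathcal{X}_d$-Bessel sequence, so that $T\circ V=P$ by continuity of $T$ applied to the defining limit. Setting $U:=V|_M\colon M\to\mathcal{X}_d$, one gets $TUP=T(V\circ P)=P\circ P=P$. For $(c)\Rightarrow(b)$ I would reverse this construction: define $V:=UP\colon\mathcal{X}\to\mathcal{X}_d$, and use continuity of the coordinate functionals on the BK-space $\mathcal{X}_d$ to produce $h_n\in\mathcal{X}^{*}$ by $h_n(x):=(V(x))_n$. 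Expanding along the Schauder basis gives $V(x)=\sum_{n}h_n(x)e_n$ in $\mathcal{X}_d$, and applying $T$ with $T(e_n)=x_n$ yields $P(x)=TUP(x)=\sum_{n}h_n(x)x_n$. I would then repackage this as an approximative atomic system by taking $m_n=n$ and $h_{n,i}:=h_i$, verifying the approximative $\mathcal{X}_d$-Bessel upper bound via $\|\{h_i(x)\}_{i=1}^{n}\|_{\mathcal{X}_d}\leq C\|V(x)\|\leq C\|V\|\cdot\|x\|$, where $C$ is a bound on the partial-sum projections of the Schauder basis $\{e_n\}$.

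The one subtle point is the handling of the double-indexed family $\{h_{n,i}\}$ when passing between the approximative formalism and the single-indexed synthesis operator $T$: in the direction $(c)\Rightarrow(b)$ one must verify that the truncated sequences $\{h_{n,i}(x)\}_{i=1}^{m_n}$ do belong to $\mathcal{X}_d$ uniformly, and this is where the BK-space hypothesis together with the Schauder basis property of $\{e_n\}$ really gets used. Everything else reduces to routine manipulation of the identity $P^{2}=P$ and the Bessel bounds.
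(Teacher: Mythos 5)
Your proposal is correct and follows essentially the same route as the paper: composing the local-atom functionals with $P$ (and restricting back to $M$) for $(a)\Leftrightarrow(b)$, taking $U$ to be the analysis operator for the implication towards $(c)$, and recovering the Bessel functionals from $U$ via the coordinate functionals $e_n^*$ (the paper writes them as $U^*(e_n^*)$) for the converse. The only cosmetic difference is that you route the equivalences through $(b)$ where the paper proves $(a)\Rightarrow(c)$ and $(c)\Rightarrow(a)$ directly, which is logically equivalent given $(a)\Leftrightarrow(b)$.
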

\begin{proof}
$(a)\Rightarrow(b)$ Since $\lbrace x_n\rbrace$ is an approximative family of local atoms for M so, there exists an approximative $\mathcal{X}_d$-Bessel sequence $\{h_{n,i}\}\underset{n \in \mathbb{N}}{_{i=1,2,3,...,m_n}}$ for $M$ such that
\begin{eqnarray*}
x=\lim\limits_{n \rightarrow \infty}\sum\limits_{i=1}^{m_n}h_{n,i}(x)x_i, \ \text{for all} \ x\in M.
\end{eqnarray*}
Now for each $y\in \mathcal{X}$, $P(y)\in M$ and
\begin{eqnarray*}
P(y)=\lim\limits_{n \rightarrow \infty}\sum\limits_{i=1}^{m_n}h_{n,i}(P(y))x_i=\lim\limits_{n \rightarrow \infty}\sum\limits_{i=1}^{m_n}(P^*(h_{n,i})(y)x_i, \ \text{for all} \ y\in \mathcal{X}.
\end{eqnarray*}
\indent Clearly $\lbrace P^*(h_{n,i})\rbrace$ is an approximative $\mathcal{X}_d$-Bessel sequence for $\mathcal{X}$. Hence, $\lbrace x_n\rbrace$ is an approximative atomic system for $P$.
\\
$(b)\Rightarrow (a)$ Since $\{x_n\}$ is an approximative atomic system for P and $x=P(x)$, for all $x\in M$, the proof follows.
\\
$(a)\Rightarrow (c)$ Since $\{x_n\}$ is an approximative family of local atoms for M so there exists an approximative $\mathcal{X}_d$-Bessel sequence $\{h_{n,i}\}\underset{n \in \mathbb{N}}{_{i=1,2,3,...,m_n}}$ for M such that
\begin{eqnarray*}
P(x)=\lim\limits_{n \rightarrow \infty}\sum\limits_{i=1}^{m_n}h_{n,i}(P(x))x_i, \ \text{for all} \ x\in \mathcal{X}.
\end{eqnarray*}
Let $U:M\rightarrow \mathcal{X}_d$ be the analysis operator for $\lbrace h_{n,i}\rbrace$ given by $U(P(x))=\lbrace h_{n,i}(P(x))\rbrace,x\in \mathcal{X}.$ Also, $\lbrace h_{n,i}(P(x)\rbrace\in \mathcal{X}_d$, for all $x\in \mathcal{X}$. Hence
\begin{eqnarray*}
P(x)&=&\lim\limits_{n \rightarrow \infty}\sum\limits_{i=1}^{m_n}h_{n,i}(P(x))x_i=T(\lbrace h_{n,i}(P(x))\rbrace)
\\
&=&TUP(x), \ \text{for all} \ x\in \mathcal{X}.
\end{eqnarray*}\\
$(c)\Rightarrow (a)$ Since $U:M\rightarrow \mathcal{X}_d$ is a bounded linear operator such that $TUP=P$. Take $h_{n,i}=U^*(e_n^*),n\in\NN$ and $x\in M$. Then $h_{n,i}(x)=U^*(e_n^*)(x)=e_n^*(U(x)).$
Therefore, $\lbrace h_{n,i}(x)\rbrace=U(x)\in \mathcal{X}_d$, for all $x\in M$ and
\begin{eqnarray*}
\Vert \lbrace h_{n,i}(x)\rbrace\Vert=\Vert U(x)\Vert\leq\Vert U\Vert\Vert x\Vert, \ \text{for all} \ x\in M.
\end{eqnarray*}
Also, for each, $x \in M,$ there exists $y\in \mathcal{X}$ such that $x=P(y)$. Moreover, $\lbrace h_{n,i}(P(y))\rbrace\in \mathcal{X}_d$ for all $y\in \mathcal{X}$ and
\begin{eqnarray*}
x=TUP(y)=T(\lbrace h_{n,i}(P(y))\rbrace)=\lim\limits_{n \rightarrow \infty}\sum\limits_{i=1}^{m_n}h_{n,i}(P(y))x_i=\lim\limits_{n \rightarrow \infty}\sum\limits_{i=1}^{m_n}h_{n,i}(y)x_i
\end{eqnarray*}
\end{proof}
Next, we shall show that if $\lbrace x_n\rbrace$ is an approximative $K$ atomic system with $K=I_\mathcal{X}$, then every complemented subspace of $\mathcal{X}$ has an approximative family of local atoms and in case $h_{n,i}=f_n$ then $\{x_n\}$ is a $K$ atomic system and every complemented subspace of $\mathcal{X}$ has a family of local atoms.
\begin{thm}
Let $\lbrace x_n\rbrace$ be a frame for $\mathcal{X}$ with respect to $\mathcal{X}_d$. If there exists an approximative $\mathcal{X}_d$-Bessel sequence $\lbrace h_{n,i}\rbrace$ such that
\begin{eqnarray*}
x=\lim\limits_{n \rightarrow \infty}\sum\limits_{i=1}^{m_n}h_{n,i}(P(x))x_i, \ \text{for all} \ x\in \mathcal{X},
\end{eqnarray*}
then every complemented subspace of $\mathcal{X}$ has an approximative family of local atoms.
\end{thm}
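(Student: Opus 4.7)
The plan is to take any complemented subspace $M$ of $\mathcal{X}$, fix a continuous projection $P\colon\mathcal{X}\to\mathcal{X}$ with range $M$, and show directly that the hypothesised approximative $\mathcal{X}_d$-Bessel sequence $\{h_{n,i}\}$ already supplies the data of an approximative family of local atoms for $M$, once pre-composed with $P$. Since $\{x_n\}$ is a frame for $\mathcal{X}$ with respect to $\mathcal{X}_d$, it is in particular a Bessel sequence for $\mathcal{X}$ with respect to $\mathcal{X}_d$, so condition (a) in the definition of an approximative family of local atoms is free.

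For condition (b), I would set $g_{n,i}:=P^{*}h_{n,i}\in\mathcal{X}^{*}$, i.e.\ $g_{n,i}(y)=h_{n,i}(P(y))$. Since $P(y)=y$ for every $y\in M$, on $M$ one has $g_{n,i}(y)=h_{n,i}(y)$, and
\[
\|\{g_{n,i}(y)\}\|_{\mathcal{X}_d}
=\|\{h_{n,i}(y)\}\|_{\mathcal{X}_d}
\le B\|y\|_{\mathcal{X}}, \quad y\in M,
\]
where $B$ is the Bessel bound of $\{h_{n,i}\}$. Thus $\{g_{n,i}\}\subset\mathcal{X}^{*}$ is an approximative $\mathcal{X}_d$-Bessel sequence for $M$. (If one prefers a globally defined bound, $\|\{g_{n,i}(x)\}\|_{\mathcal{X}_d}\le B\|P\|\,\|x\|$ for all $x\in\mathcal{X}$; only the restriction to $M$ is needed here.)

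For the reconstruction, I would apply the hypothesis to $x=y\in M$: because $P(y)=y$, the identity
\[
y=\lim_{n\to\infty}\sum_{i=1}^{m_n}h_{n,i}(P(y))x_i
\]
collapses to $y=\lim_{n\to\infty}\sum_{i=1}^{m_n}g_{n,i}(y)x_i$ for every $y\in M$. Combined with the Bessel property of $\{x_n\}$ and of $\{g_{n,i}\}$ just verified, this exhibits $\{x_n\}$ as an approximative family of local atoms for $M$ with associated approximative $\mathcal{X}_d$-Bessel sequence $\{g_{n,i}\}$. Since $M$ was an arbitrary complemented subspace, the conclusion follows.

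I do not expect a genuine obstacle: the projection $P$ already sits inside the hypothesised formula, and the only substantive observation is that $P|_{M}=I_{M}$ automatically collapses the global reconstruction onto $M$. The one place warranting a careful line is the transfer of the Bessel constant from $\{h_{n,i}\}$ to $\{g_{n,i}\}$ on $M$, which, as shown above, is immediate and requires no auxiliary factor of $\|P\|$.
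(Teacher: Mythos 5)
Your argument is sound against the literal wording of the paper's Definition of an approximative family of local atoms, but it is a genuinely different and far more elementary route than the paper's, and it bypasses what the paper's proof actually delivers. One caveat first: the $P$ in the displayed hypothesis is not an operator you may choose anew for each subspace; the paper's own proof reads the hypothesis simply as $x=\lim_{n\rightarrow\infty}\sum_{i=1}^{m_n}h_{n,i}(x)x_i$ for all $x\in\mathcal{X}$ (i.e. $TS=I_{\mathcal{X}}$, with $T$ the synthesis operator of $\lbrace x_n\rbrace$ and $S$ the analysis operator of $\lbrace h_{n,i}\rbrace$); indeed, if the formula held for a fixed projection $P\neq I_{\mathcal{X}}$ it would be contradictory, since the right-hand side depends only on $P(x)$. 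With that reading, your application of the hypothesis at points of $M$ (where $P(y)=y$) is legitimate, and your verification that $g_{n,i}=P^{*}h_{n,i}$ is an approximative $\mathcal{X}_d$-Bessel sequence for $M$ reproducing every $y\in M$ is correct. But observe that your argument never uses the frame lower bound, the complementation of $M$, or even $P$ itself (taking $g_{n,i}=h_{n,i}$ works verbatim), so it would show that every closed subspace admits $\lbrace x_n\rbrace$ as local atoms; this signals that you are verifying only the literal definition, not the content the theorem is after. The paper instead constructs a new family of atoms $y_n=P(x_n)$ lying in $M$: it shows $T_1=P\circ T$ maps $\mathcal{X}_d$ onto $M$ (so $\lbrace y_n\rbrace$ is a frame for $M$), establishes the decomposition $\mathcal{X}_d=S(M)\oplus \ker T_1$, invokes the pseudo-inverse lemma (Lemma \ref{PI}) to obtain $T_1^{\dagger}$ with $T_1T_1^{\dagger}=I_M$, and takes $f_{n,i}=(T_1^{\dagger})^{*}(z_n)$ (with $z_n$ the coordinate functionals on $\mathcal{X}_d$) as the associated approximative $\mathcal{X}_d$-Bessel sequence, so that $x=\lim_{n\rightarrow\infty}\sum_{i=1}^{m_n}f_{n,i}(x)y_i$ for $x\in M$. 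What the paper's route buys is local atoms that belong to the subspace and form a frame for it, with explicitly constructed coefficient functionals; this is precisely where the frame hypothesis and the complementation are used. What your route buys is brevity, but only because, under the literal definition, the stated conclusion is an immediate restriction of the global reconstruction; if the intended conclusion is the one the paper actually proves (atoms $P(x_n)\subset M$ forming a frame for $M$), your argument does not yield it.
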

\begin{proof}
Let $\{x_n\}$ be a frame for $\mathcal{X}$ and $T$ be the synthesis operator of $\{x_n\}$. Then by given hypothesis, there exists an approximative $\mathcal{X}_d$-Bessel sequence $\lbrace h_{n,i}\rbrace$ such that
\begin{eqnarray*}
x=\lim\limits_{n \rightarrow \infty}\sum\limits_{i=1}^{m_n}h_{n,i}(x)x_i, \ \text{for all} \ x\in \mathcal{X}.
\end{eqnarray*}
Let $S$ be the analysis operator of $\lbrace h_{n,i}\rbrace$. Note that $I_\mathcal{X}=TS$. Let $M$ be a complemented subspace of $\mathcal{X}$ and $P$ be the projection from $\mathcal{X}$ onto $M$. Take $y_n=P(x_n)$, for $n\in \mathbb{N}$. Define $T_1:\mathcal{X}_d\rightarrow M$ by $T_1=P\circ T$.
Then clearly $T_1$ is a bounded linear operator from $\mathcal{X}_d$ onto $M$ such that
\begin{eqnarray*}
T_1(\lbrace h_{n,i}\rbrace)=P(\lim\limits_{n \rightarrow \infty}\sum\limits_{i=1}^{m_n}h_{n,i}(x)x_i)=\lim\limits_{n \rightarrow \infty}\sum\limits_{i=1}^{m_n}h_{n,i}(x)y_i,~~ \lbrace h_{n,i}\rbrace\in \mathcal{X}_d.
\end{eqnarray*}
Thus, $\lbrace y_n\rbrace$ is frame for $M$ and a Bessel sequence for $\mathcal{X}$ with respect to $\mathcal{X}_d$. Let $N=S(M)$ and let $\alpha=\lbrace h_{n,i}\rbrace\in \mathcal{X}_d$. Then, $T_1(\alpha)=x,$ for some $x\in M.$ Now take $\alpha_1=S(x).$ Then, $\alpha_1\in N.$ Also, if $\alpha_0=\alpha-\alpha_1,$ then
\begin{eqnarray*}
T_1(\alpha_0)=T_1(\alpha)-T_1(\alpha_1)
=x-P(x)=0.
\end{eqnarray*}
This gives $\alpha_0 \in kerT_1.$ Thus $\alpha=\alpha_0+\alpha_1$. Then $\lambda\in N\cap kerT_1$. Then $T_1(\lambda)=0$ and there exists $x_1\in M$ such that $\lambda=S(x_1)$. Therefore
\begin{eqnarray*}
x_1=P(x_1)=PTS(x_1)=T_1(\lambda)=0.
\end{eqnarray*}
Thus, $\lambda=0$ and so $N\cap kerT_1=\lbrace 0\rbrace$. Therefore $\mathcal{X}_d=N\oplus kerT_1$. Now, by Lemma \ref{PI}, $T_1$ has a pseudo inverse $T_1^\dagger$. Moreover $T_1T_1^\dagger$ is a projection from $M$ onto $T_1(\mathcal{X}_d)=M$. This gives $T_1T_1^\dagger=I_M$. Let $\lbrace z_n\rbrace$ be the sequence of coordinate functionals on $\mathcal{X}_d$. Take $f_{n,i}=(T_1^\dagger)^*(l_n)$, for $n\in \mathbb{N}$ and let $x\in M$. Then
\begin{eqnarray*}
f_{n,i}(x)=(T_1^\dagger)^*(z_n)(x)=z_n(T_1^\dagger(x)), \ \text{for} \ n\in\mathbb{N}, \ x\in M.
\end{eqnarray*}
Therefore $\lbrace f_{n,i}(x)\rbrace=T_1^\dagger(x)\in \mathcal{X}_d$, for all $x\in M$ and
\begin{eqnarray*}
\Vert\lbrace f_{n,i}(x)\rbrace\Vert=\Vert T_1^\dagger(x)\Vert\leq\Vert T_1^\dagger\Vert\Vert x\Vert, \ \text{for all} \ x\in M.
\end{eqnarray*}
Thus $\lbrace f_{n,i}\rbrace$ is an approximative $\mathcal{X}_d$-Bessel sequence for $M$. Therefore, we compute
\begin{eqnarray*}
x=T_1T_1^\dagger(x)=T_1(\lbrace f_{n,i}(x)\rbrace)=\lim\limits_{n \rightarrow \infty}\sum\limits_{i=1}^{m_n}f_{n,i}(x)y_i, \ \text{for all} \ x\in M.
\end{eqnarray*}
\end{proof}
Let $\lbrace x_n\rbrace$ be a Bessel sequence for $\mathcal{X}$ with respect to $\mathcal{X}_d$ and let T be its associated synthesis operator. Also, let $\lbrace h_{n,i}\rbrace$ be an approximative $\mathcal{X}_d$-Bessel sequence for $\mathcal{X}$ with its associated analysis operator S. Then $TS: \mathcal{X} \rightarrow \mathcal{X}$ is an operator such that
\begin{eqnarray*}
TS(x)=\lim\limits_{n \rightarrow \infty}\sum\limits_{i=1}^{m_n}h_{n,i}(x)x_i, \ \text{for all} \ x\in \mathcal{X}.
\end{eqnarray*}
Note that $\lbrace x_n\rbrace$ is an approximative atomic system for $K$ in $\mathcal{X}$ with its associated approximative $\mathcal{X}_d$-Bessel sequence $\lbrace h_{n,i}\rbrace$ if and only if
\begin{eqnarray}\label{E2}
TS=K.
\end{eqnarray} Also one may observe that in case $\{h_{n,i}\}=\{f_i\}$ then $\{x_n\}$ is an atomic system for $K$ in $\mathcal{X}$ with its associated $\mathcal{X}_d$-Bessel sequence $\{f_i\}$\\
The following theorem gives a method how to construct an approximative atomic system for $K$ from a given Bessel sequence.
\begin{thm}
Let $\mathcal{X}_d^*$ has a sequence of canonical unit vectors $\lbrace e_n^*\rbrace$ as basis. Let $\lbrace x_n\rbrace$ be a Bessel sequence for $\mathcal{X}$ with respect to $\mathcal{X}_d$ and $T:\mathcal{X}_d\rightarrow \mathcal{X}$ be its synthesis operator such that T has pseudoinverse $T^\dagger$. Let $K\in  L(\mathcal{X})$ with $K(\mathcal{X})\subseteq T(\mathcal{X}_d)$. Then $\mathcal{X}$ has an approximative atomic system for $K.$
\end{thm}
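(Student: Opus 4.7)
The plan is to produce an approximative $\mathcal{X}_d$-Bessel sequence $\lbrace h_{n,i}\rbrace$ so that the pair $(\lbrace x_n\rbrace,\lbrace h_{n,i}\rbrace)$ realises the reconstruction in the definition of an approximative atomic system for $K$. In view of the operator identity $TS=K$ recorded in (\ref{E2}), this reduces to producing a bounded operator $S:\mathcal{X}\rightarrow\mathcal{X}_d$ with $TS=K$. The natural candidate is $S:=T^{\dagger}K$: by Lemma \ref{5}, $TT^{\dagger}$ is a projection with range $T(\mathcal{X}_d)$, and the hypothesis $K(\mathcal{X})\subseteq T(\mathcal{X}_d)$ then forces $TT^{\dagger}K(x)=K(x)$ for every $x\in\mathcal{X}$, so $TS=K$.

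To convert $S$ into functionals, let $\lbrace z_n\rbrace$ be the coordinate functionals on $\mathcal{X}_d$ dual to the canonical basis $\lbrace e_n\rbrace$, which are continuous because $\mathcal{X}_d$ is a BK-space. Set $f_n:=z_n\circ S\in\mathcal{X}^*$; then $\lbrace f_n(x)\rbrace$ is precisely the coordinate sequence of $S(x)\in\mathcal{X}_d$, so $\lbrace f_n(x)\rbrace\in\mathcal{X}_d$ and $\Vert\lbrace f_n(x)\rbrace\Vert_{\mathcal{X}_d}=\Vert S(x)\Vert\leq\Vert T^{\dagger}\Vert\,\Vert K\Vert\,\Vert x\Vert$, which makes $\lbrace f_n\rbrace$ an $\mathcal{X}_d$-Bessel sequence for $\mathcal{X}$. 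Putting $m_n:=n$ and $h_{n,i}:=f_i$ for $1\leq i\leq n$, the observation recorded right after the definition of approximative $\mathcal{X}_d$-Bessel sequence promotes $\lbrace h_{n,i}\rbrace$ to an approximative $\mathcal{X}_d$-Bessel sequence for $\mathcal{X}$.

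The reconstruction identity then follows by a brief continuity argument inside $\mathcal{X}_d$. For each $x\in\mathcal{X}$,
\[
\sum_{i=1}^{n}h_{n,i}(x)x_i=\sum_{i=1}^{n}z_i(S(x))\,T(e_i)=T\Bigl(\sum_{i=1}^{n}z_i(S(x))\,e_i\Bigr),
\]
and because $\lbrace e_n\rbrace$ is a Schauder basis of $\mathcal{X}_d$, the inner partial sums converge to $S(x)$; continuity of $T$ therefore yields $\sum_{i=1}^{n}h_{n,i}(x)x_i\rightarrow T(S(x))=K(x)$. Combined with the given Bessel property of $\lbrace x_n\rbrace$, this verifies both clauses of the definition, so $\lbrace x_n\rbrace$ is an approximative atomic system for $K$ with associated sequence $\lbrace h_{n,i}\rbrace$. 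The only delicate step is the role of the inclusion $K(\mathcal{X})\subseteq T(\mathcal{X}_d)$: it is precisely what collapses the projection $TT^{\dagger}$ against $K$, since $TT^{\dagger}$ acts as the identity only on its range $T(\mathcal{X}_d)$, not on all of $\mathcal{X}$; beyond this, the argument is a routine composition of bounded operators together with the coordinate expansion afforded by the canonical basis, so no further obstacle is expected.
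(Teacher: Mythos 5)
Your proof is correct and follows essentially the same route as the paper: the paper constructs $S=T^{\dagger}K+W-T^{\dagger}TW$ for an arbitrary bounded $W:\mathcal{X}\rightarrow\mathcal{X}_d$ (thereby parametrizing all bounded solutions of $TS=K$) and takes $h_{n,i}=S^*(e_n^*)$, whereas you take the special case $W=0$, i.e. $S=T^{\dagger}K$, and pull back the coordinate functionals, which is all that the existence statement requires. The only differences are cosmetic: your verification of the reconstruction via the basis expansion of $S(x)$ in $\mathcal{X}_d$ matches the paper's $T(\lbrace h_{n,i}(x)\rbrace)=TS(x)=K(x)$ computation, and you in fact use slightly less than the stated hypothesis on $\mathcal{X}_d^*$.
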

\begin{proof}
Since $T$ has a pseudo inverse $T^\dagger$, by Lemma \ref{5},  $TT^\dagger$ is a projection from $\mathcal{X}$ onto $T(\mathcal{X}_d)$. Define $S:\mathcal{X}\rightarrow \mathcal{X}_d$ as
\begin{eqnarray}\label{E3}
S=T^\dagger K+W-T^\dagger TW,
\end{eqnarray}
where $W:\mathcal{X} \rightarrow \mathcal{X}_d$ is a bounded linear operator. Then, we compute
\begin{eqnarray*}
TS&=&T(T^\dagger K+W-T^\dagger TW)
\\
&=&TT^\dagger K+TW-TT^\dagger TW
\\
&=&TT^\dagger K=K.
\end{eqnarray*}
Note that $T^*=J_d^{-1}\circ R$ and $\lbrace J_de_n^*\rbrace$ is a basis of $\mathcal{Y}_d$.
 Taking $h_{n,i}=S^*(e_n^*)$, $f_{n,i}=(T^+)^*(e_n^*)$ and $g_n=W^*(e_n^*)$ , for $n\in \mathcal{N},$ we compute
\begin{eqnarray*}
T^*(T^\dagger)^*e_n^*=T^*(f_{n,i})=J_d(R(f_{n,i}))=\lim\limits_{n \rightarrow \infty}\sum\limits_{i=1}^{m_n}f_{n,i}(x_i)e_i^*
\end{eqnarray*}
and
\begin{eqnarray*}
h_{n,i}&=&K^*(T^\dagger)^*(e_n^*)+W^*(e_n^*)-W^*T^*(T^\dagger)^*(e_n^*)
\\
&=&K^*f_{n,i}+g_i-\lim\limits_{n \rightarrow \infty}\sum\limits_{i=1}^{m_n}f_{n,i}(x_i)g_i, \ \text{for all} \ n\in \mathbb{N}.
\end{eqnarray*}
Also, for $x\in \mathcal{X}$, we have
\begin{eqnarray*}
h_{n,i}(x)=S^*(e_n^*)(x)=e_n^*(S(x)), \ \text{for all} \ n\in \mathbb{N}.
\end{eqnarray*}
Therefore $\lbrace h_{n,i}(x)\rbrace=S(x)\in \mathcal{X}_d$, for all $x \in \mathcal{X}$. Also,
\begin{eqnarray*}
\Vert\lbrace h_{n,i}(x)\rbrace\Vert&=&\Vert S(x)\Vert
\\
&\leq&\Vert T^\dagger K+W-T^\dagger TW\Vert\Vert x\Vert,~x \in \mathcal{X}.
\end{eqnarray*}
Thus, $\lbrace h_{n,i}\rbrace$ is an approximative $\mathcal{X}_d$- Bessel sequence and
\begin{eqnarray*}
\lim\limits_{n \rightarrow \infty}\sum\limits_{i=1}^{m_n}h_{n,i}(x)x_i&=&T(\lbrace h_{n,i}(x)\rbrace)=T(\lbrace S^*(e_n^*)(x)\rbrace)
\\
&=&T(\lbrace e_n^*(S(x))\rbrace)=TS(x)
\\
&=&K(x), \ x\in \mathcal{X}.
\end{eqnarray*}
Hence, $\lbrace x_n\rbrace$ is an approximative atomic system for $K$.
\end{proof}
Note that (\ref{E2}) is the general formula for all $U$ satisfying the equality (\ref{E3}). As a result of this, let $U_\circ$ be a linear operator such that $TU_\circ=K$ and take $W=U_\circ$ in the right side of (\ref{E3}), then
\begin{eqnarray*}
T^\dagger K+U_\circ-T^\dagger TU_\circ=T^\dagger K+U_\circ-T^\dagger K=U_\circ
\end{eqnarray*}
\indent The following theorem shows that we can construct an approximative atomic system for $K$ from a given approximative $\mathcal{X}_d$-Bessel sequence.
\begin{thm}
 Let $\mathcal{X}_d$ be a BK-space and let $\lbrace e_n\rbrace$ be a sequence of canonical unit vectors as basis of $\mathcal{X}_d$ and $\mathcal{X}_d^*$ has a sequence of canonical unit vectors $\lbrace e_n^*\rbrace$ as basis. Let $\lbrace h_{n,i}\rbrace$ be an approximative $\mathcal{X}_d$-Bessel sequence for $\mathcal{X}$ with $S:\mathcal{X} \rightarrow \mathcal{X}_d$ as its analysis operator and S has pseudo inverse $S^\dagger$. Let $K\in L(\mathcal{X})$ and $K^*(\mathcal{X}^*)=[h_{n,i}]$. Then, $\mathcal{X}$ has an approximative atomic system for $K.$
\end{thm}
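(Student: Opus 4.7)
The plan is to produce a bounded linear operator $T: \mathcal{X}_d \to \mathcal{X}$ with $TS = K$ and then set $x_n = T(e_n)$; this will automatically make $\{x_n\}$ a Bessel sequence whose associated approximative $\mathcal{X}_d$-Bessel sequence is precisely $\{h_{n,i}\}$. First, I would identify the range of $S^*$ with the closed linear span $[h_{n,i}]$. Since $\{e_n^*\}$ is a Schauder basis of $\mathcal{X}_d^*$ and each $S^*(e_n^*)$ coincides with one of the functionals $h_{n,i}$ (under the implicit re-indexing used in the paper), every element of $\text{Range}(S^*)$ is a norm-limit of finite combinations of the $h_{n,i}$'s, so $\text{Range}(S^*) \subseteq [h_{n,i}]$. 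Because $S$ has a pseudo inverse, Lemma \ref{PI} yields a decomposition $\mathcal{X}_d = S(\mathcal{X}) \oplus Z$; in particular $S(\mathcal{X})$ is closed, and by the standard closed-range duality $\text{Range}(S^*)$ is closed as well. Hence $\text{Range}(S^*) = [h_{n,i}]$, and combined with the hypothesis $K^*(\mathcal{X}^*) = [h_{n,i}]$ this gives $\text{Range}(K^*) \subseteq \text{Range}(S^*)$.

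Next, since $\overline{\text{Range}(S)} = S(\mathcal{X})$ is complemented in $\mathcal{X}_d$, Lemma \ref{7} (with the roles of its ``$S$'' and ``$T$'' played by our $K$ and $S$) produces a bounded linear operator $T: \mathcal{X}_d \to \mathcal{X}$ with $K = TS$. An equivalent explicit choice is $T := K S^\dagger$: one computes $TS = K S^\dagger S$, and since $\ker(S^\dagger S) = \ker S$ and the hypothesis forces $K$ to vanish on $\ker S$ (if $Sx = 0$ then $h_{n,i}(x) = 0$ for all $n,i$, hence $f(x) = 0$ for every $f \in [h_{n,i}] = K^*(\mathcal{X}^*)$, so $\phi(Kx) = 0$ for all $\phi \in \mathcal{X}^*$, forcing $Kx = 0$), we recover $TS = K$.

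Finally, setting $x_n = T(e_n)$, the fact that $\{e_n\}$ is a Schauder basis of $\mathcal{X}_d$ together with continuity of $T$ shows, via Observation IV, that $\{x_n\}$ is a Bessel sequence for $\mathcal{X}$ with respect to $\mathcal{X}_d$ whose synthesis operator is exactly $T$. For any $x \in \mathcal{X}$, applying $T$ to $S(x) = \{h_{n,i}(x)\}$ gives
\begin{equation*}
K(x) = TS(x) = T\bigl(\{h_{n,i}(x)\}\bigr) = \lim_{n \to \infty} \sum_{i=1}^{m_n} h_{n,i}(x)\, x_i,
\end{equation*}
which is the decomposition required to witness that $\{x_n\}$ is an approximative atomic system for $K$ with associated approximative $\mathcal{X}_d$-Bessel sequence $\{h_{n,i}\}$. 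The main obstacle I anticipate is the bookkeeping between the double-indexed approximative sequence $\{h_{n,i}\}$ and the single-indexed basis $\{e_n\}$ of $\mathcal{X}_d$: checking that $T$ applied to $S(x)$ actually produces the iterated limit on the right-hand side requires matching the paper's implicit convention for how approximative sequences embed into $\mathcal{X}_d$. This should follow from continuity of $T$ and the Schauder-basis property, but it is the step that must be justified carefully rather than by pure symbolic manipulation.
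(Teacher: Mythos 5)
Your proposal is correct and follows essentially the same route as the paper: the paper also factors $K = TS$ (its operator $T = KS^\dagger + W(I - SS^\dagger)$ reduces to your choice $T = KS^\dagger$ when $W = 0$), then sets $x_n = T(e_n)$, invokes Observation IV for the Bessel property, and concludes with $K(x) = TS(x) = \lim_{n\to\infty}\sum_{i=1}^{m_n} h_{n,i}(x)\,x_i$. The only difference is in how $KS^\dagger S = K$ is verified: you show $K$ vanishes on $\ker S$ using $K^*(\mathcal{X}^*) = [h_{n,i}]$ (or invoke Lemma \ref{7}), while the paper deduces the adjoint identity $S^*(S^\dagger)^*K^* = K^*$ from the same range equality, which is an equivalent justification.
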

\begin{proof} Since $\lbrace h_{n,i}\rbrace$ is an approximative $\mathcal{X}_d$-Bessel sequence for $\mathcal{X}$ with $S:\mathcal{X} \rightarrow \mathcal{X}_d$ as its analysis operator and S has pseudoinverse $S^\dagger.$
Define $T:\mathcal{X}_d \rightarrow \mathcal{X}$ by
\begin{eqnarray}\label{E4}
T=KU^\dagger +W(I-UU^\dagger),
\end{eqnarray}
where $W:\mathcal{X}_d\rightarrow \mathcal{X}$ is a bounded linear operator. Then, we compute
\begin{eqnarray*}
TS&=&(KS^\dagger +W(I-SS^\dagger))S
\\
&=&KS^\dagger S+WS-WSS^\dagger S
\\
&=&KS^\dagger S.
\end{eqnarray*}
Also, for $x\in \mathcal{X}$, $S^*:\mathcal{X}_d^*\rightarrow \mathcal{X}^*$ is given by
\begin{eqnarray*}
S^*(e_n^*)(x)=e_n^*(S(x))=h_{n,i}(x), \ n\in \mathcal{N}.
\end{eqnarray*}
So, we have
\begin{eqnarray*}
[h_{n,i}]=[S^*(e_n^*)]=S^*(\mathcal{X}_d^*)=K^*(\mathcal{X}^*).
\end{eqnarray*}
Also $S$ has a pseudo inverse $S^\dagger$, $S^*$ has pseudo inverse $(S^\dagger)^*$. So,
\begin{eqnarray*}
S^*(S^\dagger)^*S^*(\mathcal{X}_d^*)=S^*(\mathcal{X}_d^*)=K^*(\mathcal{X}^*).
\end{eqnarray*}
Thus, we conclude that $S^*(S^\dagger)^*K^*=K^*$. Therefore, $TS=KS^\dagger S=K$.
Let $x_n=T(e_n)$, $y_n=S^\dagger(e_n)$ and $l_n=W(e_n)$, for $n\in \mathbb{N}$.  Then
\begin{eqnarray*}
x_n=KS^\dagger(e_n)+W(I-SS^\dagger)(e_n)
\\
=K(y_n)+l_n-\lim\limits_{n \rightarrow \infty}\sum\limits_{i=1}^{m_n}h_{n,i}(y_i)l_i, ~~n\in \mathcal{N}.
\end{eqnarray*}
Also, $T: \mathcal{X}_d\rightarrow \mathcal{X}$ is a bounded linear operator given by
\begin{eqnarray*}
T(\lbrace h_{n,i}\rbrace)=\lim\limits_{n \rightarrow \infty}\sum\limits_{i=1}^{m_n}h_{n,i} x_i,~~ \lbrace h_{n,i}\rbrace\in \mathcal{X}_d.
\end{eqnarray*}
By observation III, $\lbrace x_i\rbrace$ is a Bessel sequence. Thus, we get
\begin{eqnarray*}
K(x)=TS(x)=T(\lim\limits_{n \rightarrow \infty}\sum\limits_{i=1}^{m_n}h_{n,i}(x)e_i)
=\lim\limits_{n \rightarrow \infty}\sum\limits_{i=1}^{m_n}h_{n,i}(x)x_i, \ x \in \mathcal{X}.
\end{eqnarray*}
\indent Hence $\lbrace x_n\rbrace$ is an approximative atomic system for K.
\end{proof}

\end{document}